\def\co{\colon\thinspace}
\DeclareMathAlphabet{\mathsfsl}{OT1}{cmss}{m}{sl}
\newtheorem{thm}{Theorem}[section]
\newtheorem{lem}[thm]{Lemma}
\newtheorem{cor}[thm]{Corollary}
\newtheorem{prop}[thm]{Proposition}
\theoremstyle{definition}
\newtheorem{defn}[thm]{Definition}
\newtheorem{rem}[thm]{Remark}
\newcommand{\HFK}{\widehat{HFK}}
\newcommand{\on}{\operatorname}
\newcommand{\Spinc}{\on{Spin}^c}
\newcommand{\Z}{\mathbb{Z}}
\newcommand{\Q}{\mathbb{Q}}
\newcommand\goth[1]{\mathfrak{#1}}
\newcommand{\s}{\goth{s}}
\newcommand{\rank}{\text{rank}}
\begin{document}

\title{On mapping cones of Seifert fibered surgeries}

\author{{\large Zhongtao WU}\\{\normalsize Department of Mathematics, Caltech, MC 253-37}\\
{\normalsize 1200 E California Blvd, Pasadena, CA
91125}\\{\small\it Emai\/l\/:\quad\rm zhongtao@caltech.edu}}

\date{}
\maketitle

\begin{abstract}
Using the mapping cone of a rational surgery, we give several obstructions for Seifert fibered surgeries, including obstructions on the Alexander polynomial, the knot Floer homology, the surgery coefficient and the Seifert and four-ball genus of the knot.  These generalize the corresponding results in \cite{KMOS}\cite{OSzSeifert}.
\end{abstract}

\section{Introduction}

Seifert fibered spaces comprise a special, yet broad class of three-manifolds. A Seifert fibered space is a three-manifold together with a ``nice'' decomposition as a disjoint union of circles.  They account for all compact oriented manifolds in six of the eight Thurston geometries of the geometrization conjecture.  It is a meaningful question to characterize knots $K$ in the three-sphere so that certain surgery along $K$ can yield a Seifert fibered rational homology sphere.  This question has received considerable attention recently \cite{Dean}\cite{EM}\cite{G}.

In \cite{OSzSeifert}, certain obstructions were established that comes from the Heegaard Floer homology of \cite{OSzAnn1} and the related knot invariant defined in \cite{OSzKnot} and \cite{RasThesis}.  Ozsv\'ath and Szab\'o, among other things, proved that all the non-zero torsion coefficients $t_i(K)$ (to be defined soon) have the same sign if there is an integer $q\neq 0$ for which $S^3_{1/q}(K)$ is Seifert fibered.  In \cite{KMOS}, Kronheimer, Mrowka, Ozsv\'ath and Szab\'o used the monopole Floer homology to give an obstruction on the degree of Alexander polynomial.  More precisely, they proved that if $K$ is a knot whose Alexander polynomial $\Delta_K(T)$ has degree strictly less than its Seifert genus, then there is no rational number $p/q \geq0$ such that $S^3_{p/q}(K)$ is a positively oriented Seifert fibered space.  In both references \cite{OSzSeifert}\cite{KMOS}, it was also shown that when the Seifert genus $g>1$, $S^3_{1/q}(K)$ is never a negatively Seifert fibered space for any $q>0$ if $\Delta_K(T)$ has degree strictly less than $g$.  The aim of the present article is to generalize these results to all rational $p/q$-surgeries and Seifert fibered spaces of both orientations.

The Seifert fibered spaces to be considered in this article are all rational homology spheres.  Such a manifold can be realized as the boundary of a four-manifold $W(\Gamma)$ obtained by plumbing two-spheres according to a weighted tree $\Gamma$.  We define the orientation of a Seifert fibered space from the intersection form.

\begin{defn}
Suppose that $\Gamma$ is a weighted tree which has either negative-definite or negative-semi-definite intersection form.  Then, we say that the induced orientation $-\partial W(\Gamma)$ is a positive Seifert orientation.

\end{defn}

Note that if $Y$ is a Seifert fibered space with $b_1(Y)=0$ then at least one of $+Y$ or $-Y$ has a positive Seifert orientation.  Moreover, either orientation on any lens spaces is a positive Seifert orientation.  Our first obstruction of Seifert fibered surgeries can be expressed in terms of the Alexander polynomial.

\begin{thm}\label{PSFS}
Let $K\subset S^3$ be a knot in the three-sphere.  Write its symmetrized Alexander polynomial as
$$\Delta_K(T)=a_0+\sum_{i>0}a_i(T^i+T^{-i}),$$
and let
\begin{equation}\label{torsion}
t_i(K)=\sum_{j=1}^{\infty}ja_{|i|+j}
\end{equation}
denote the torsion coefficients of the knot.  Then, if there is a rational number $p/q>0$ for which $S^3_{p/q}(K)$ is positively Seifert fibered, all the torsion coefficients $t_i(K)$ are non-negative; if there is a rational number $0<p/q<3$ for which $S^3_{p/q}(K)$ is negatively Seifert fibered, then for all $i>0$ the torsion coefficients $t_i(K)$ are non-positive.

\end{thm}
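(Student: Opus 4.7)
The plan is to use the rational surgery mapping cone formula of Ozsv\'ath--Szab\'o to express $HF^+(S^3_{p/q}(K))$ in terms of knot Floer data, and then exploit the restricted form of $HF^+$ for positively (resp.\ negatively) Seifert fibered rational homology spheres to force sign conditions on the torsion coefficients.

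For each Spin$^c$ structure $\mathfrak{s}$ on $S^3_{p/q}(K)$, the rational surgery mapping cone formula identifies $HF^+(S^3_{p/q}(K), \mathfrak{s})$ with the homology of a mapping cone built from the truncated knot Floer modules $A^+_i(K)$ and $B^+_i$ with structure maps $v^+_i, h^+_i$. Extracting $d$-invariant information gives a Ni--Wu style formula
$$d(S^3_{p/q}(K), \mathfrak{s}) \;=\; d(L(p,q), \mathfrak{s}') \;-\; 2 \max\bigl(V_n(K),\, H_{n'}(K)\bigr),$$
where $V_n(K), H_{n'}(K)$ are non-negative integers extracted from the $U$-towers of $A^+_n(K)$ under $v^+_n$ and $h^+_n$, and the indices $n, n'$ are determined by $\mathfrak{s}$. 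A comparison with the Euler characteristic of $A^+_n(K)$ relates $V_n(K) - H_n(K)$ to the torsion coefficient $t_n(K)$, with equality $V_n - H_n = t_n$ in the $L$-space case and the appropriate correction in general; in particular, the sign of $t_n(K)$ is encoded in the asymmetry between $V_n$ and $H_n$.

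Next, I would invoke the positive Seifert orientation: $-S^3_{p/q}(K)$ bounds a negative-(semi-)definite plumbing $W(\Gamma)$. Combining Ozsv\'ath--Szab\'o's characterization of $HF^+$ for plumbed Seifert fibered manifolds with the $d$-invariant inequality for negative-definite bounding four-manifolds yields an upper bound $d(S^3_{p/q}(K), \mathfrak{s}) \leq d(L(p,q), \mathfrak{s}')$ matching the lens space value $\mathfrak{s}'$ under the natural identification of Spin$^c$ structures. Plugging this back into the mapping cone formula forces $V_n(K)$ to dominate $H_{n'}(K)$ for every Spin$^c$ structure, and the identification above then yields $t_n(K) \geq 0$ for all $n$.

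The negatively Seifert fibered case proceeds symmetrically, with the roles of $V_n$ and $H_{n'}$ interchanged and the $d$-invariant inequality reversed. The restriction $0 < p/q < 3$ enters because the mapping cone for $S^3_{p/q}(K)$ distributes its contributions across Spin$^c$ classes indexed by $\lfloor i/q \rfloor$, and only for $p/q$ in this range can the negatively Seifert fibered hypothesis be leveraged to control every contributing class in a sign-coherent way; for $p/q \geq 3$ extra Spin$^c$ classes fall outside the reach of the analogous Fr\o yshov-type bound applied to the opposite orientation. The main technical obstacle will be the second step, namely confirming that the Seifert fibered hypothesis produces a $d$-invariant bound sharp enough to force $V_n \geq H_{n'}$ for \emph{every} Spin$^c$ structure simultaneously, rather than only for a single class; this requires exploiting the detailed structure of $HF^+$ for positively Seifert fibered manifolds beyond the raw Fr\o yshov inequality, in particular the combinatorial description of the $d$-invariants in terms of characteristic vectors on $W(\Gamma)$.
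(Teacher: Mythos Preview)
Your approach has a genuine gap: the mechanism you propose for linking the Seifert hypothesis to the torsion coefficients is not the right one.

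First, the claimed relation ``$V_n - H_n = t_n$ in the $L$--space case, with the sign of $t_n$ encoded in the asymmetry between $V_n$ and $H_n$'' is incorrect. By the symmetry $V_k = H_{-k}$ and the strict inequality $V_k < H_k$ for every $k>0$ (these hold for \emph{any} knot; see Lemmas~\ref{lem:VHMono} and~\ref{lem:V0H0}), the difference $V_n - H_n$ has a fixed sign independent of $K$ and carries no information about $t_n(K)$. For an $L$--space knot one has $t_i = V_i$ for $i>0$, and in general
\[
t_i(K) \;=\; -\chi\bigl(HF^+(S^3_0(K),i)\bigr) \;=\; V_i \;-\; \mathrm{rank}\,A^{odd}_{i,\mathrm{red}} \;+\; \mathrm{rank}\,A^{even}_{i,\mathrm{red}},
\]
so the sign of $t_i$ is governed by the $\Z/2\Z$--graded \emph{reduced} part of $H_*(A^+_i)$, not by $V_i$ versus $H_i$. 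A $d$--invariant inequality of Fr{\o}yshov type only sees the tower $A^T_i$ and the numbers $V_i,H_i$; it cannot by itself force $A^{odd}_{i,\mathrm{red}}$ or $A^{even}_{i,\mathrm{red}}$ to vanish, which is what you actually need.

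The paper's argument is different in kind. The input from the Seifert hypothesis is not a $d$--invariant bound but the much stronger parity statement (Theorem~\ref{OSzPlumb}) that $HF^+$ of a positively Seifert fibered rational homology sphere is supported entirely in even $\Z/2\Z$--degree. Feeding this through the rational mapping cone (using surjectivity of $D^T_{i,p/q}$, Lemma~\ref{lem:>0Surj}) forces $A^{odd}_{k,\mathrm{red}}=0$ for all $k$ (Theorem~\ref{mainpos}); the zero--surgery mapping cone~(\ref{zerosurgery}) then gives $t_i\ge 0$ (Corollary~\ref{postor}). In the negative case one similarly gets $A^{even}_{k,\mathrm{red}}=0$, and the extra hypothesis $p/q<3$ is used, via a direct argument inside the mapping cone $\mathbb X^+_{i,p/q}$, to prove $V_1=0$: when $p/q<3$ there is always a Spin$^c$ class in which the summand immediately to the left of $A^+_1$ is $A^+_0$ or $A^+_{-1}$, and this adjacency is exactly what lets one show that the bottom of $A^T_1$ would produce an even--graded reduced class if $V_1>0$ (Theorem~\ref{mainneg}). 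Your explanation of the role of $p/q<3$ in terms of ``Spin$^c$ classes falling outside a Fr{\o}yshov bound'' does not reflect this mechanism.
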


We have also a Seifert fibered surgery obstruction, which can be stated in terms of the knot Floer homology.  The knot Floer homology determines the genus of the knot
\begin{equation}\label{genus}
g(K)=\mathrm{max}\{i|\HFK (K,i)\neq 0\},
\end{equation}
and the Alexander polynomial:
\begin{equation}\label{Alexander}
\Delta_K(T)=\sum_{i\in\Z}\chi(\HFK(K,i))\cdot T^i.
\end{equation}

\begin{thm}\label{knotobst}
Let $K\subset S^3$ be a knot with genus $g$.  If there is a rational number $p/q>0$ such that $S^3_{p/q}(K)$ is a positively oriented Seifert fibered space, then $\HFK(S^3,K,g)$ is trivial in odd degrees (and non-trivial in even degrees).  If there is a rational number $p/q>0$ such that $S^3_{p/q}(K)$ is a negatively oriented Seifert fibered space, and the genus of the knot $g>1$ and $2g-1>p/q$, then $\HFK(S^3,K,g)$ is trivial in even degrees (and non-trivial in odd degrees).
\end{thm}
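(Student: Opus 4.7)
The strategy is to apply the Ozsv\'ath--Szab\'o rational surgery mapping cone to realize $\HFK(S^3,K,g)$ as a direct summand of $\HF(S^3_{p/q}(K))$ in a distinguished $\Spinc$ structure, and then to use the known parity structure of $\HF$ for Seifert fibered rational homology spheres to constrain the Maslov degrees of $\HFK(K,g)$.

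Concretely, $\HF(S^3_{p/q}(K))$ is the homology of the cone $D\co \bigoplus_s A_s \to \bigoplus_s B_s$, with $A_s$ the homology of the quotient $CFK^\infty\{\max(i,j-s)\le 0\}$ and $B_s \cong \HF(S^3)$. I would focus on the extremal Alexander grading $s=g$. Since $\HFK(K,i)=0$ for $|i|>g$ by \eqref{genus}, a direct computation yields
\begin{equation*}
A_g \;\cong\; \HF(S^3) \;\oplus\; \HFK(S^3,K,g)
\end{equation*}
as an absolutely $\Z$-graded group. In the mapping cone, one of the two edge maps from $A_g$ identifies the $\HF(S^3)$ summand with the tower in a neighbouring $B_s$, leaving the $\HFK(K,g)$ summand as a direct summand of the cone's homology. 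Consequently, for the $\Spinc$ structure $\t_g$ on $S^3_{p/q}(K)$ determined by $s=g$, $\HF(S^3_{p/q}(K),\t_g)$ contains a direct summand isomorphic to $\HFK(S^3,K,g)$, shifted by an absolute Maslov grading depending only on $p$, $q$, and $g$.

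Next I would invoke the parity structure of $\HF$ for Seifert fibered rational homology spheres. By Ozsv\'ath--Szab\'o's theorem on negative-(semi-)definite plumbed three-manifolds, any rational homology sphere bounding a negative-(semi-)definite plumbing has $\HF$ supported in a single parity of Maslov grading in each $\Spinc$ structure. This pins down a definite parity for $\HF(S^3_{p/q}(K),\t_g)$ in the positively Seifert fibered case, and, via the orientation-reversal correspondence between the two Seifert orientations, the opposite parity in the negatively Seifert fibered case. Matching parities against the direct summand $\HFK(K,g)\hookrightarrow \HF(S^3_{p/q}(K),\t_g)$ forces $\HFK(S^3,K,g)$ to be supported in even Maslov degrees in the positive case and in odd Maslov degrees in the negative case; non-triviality in the respective parity is automatic from $\HFK(S^3,K,g)\ne 0$, which is built into \eqref{genus}. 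The role of the hypotheses $g>1$ and $2g-1>p/q$ in the negative case is precisely to ensure that the $A_g$ summand remains isolated from other cone contributions in the $\Spinc$ structure $\t_g$.

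The main obstacle is the absolute-grading bookkeeping inside the mapping cone: identifying the $\Spinc$ structure $\t_g$ attached to the extremal Alexander grading, computing the precise Maslov shift between the knot Floer complex and the surgery homology, and, in the negatively Seifert case, verifying that the hypotheses $g>1$ and $2g-1>p/q$ are exactly what is needed to keep $A_g$ from being cancelled against neighbouring $A_s$ or $B_s$ contributions. The parity statement itself is black-boxed from Ozsv\'ath--Szab\'o's plumbed manifold theorem together with the orientation-reversal comparison between the two Seifert orientations.
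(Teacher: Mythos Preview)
Your overall instinct---combine the rational surgery mapping cone with the parity constraint on $\HF$ of Seifert fibered rational homology spheres---is correct, but the specific route you propose breaks down at the first computational step, and the paper takes a genuinely different path.

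The central gap is the claim $A_g \cong \HF(S^3)\oplus \HFK(S^3,K,g)$. With any of the standard conventions (your stated $C\{\max(i,j-s)\le 0\}$, or $\hat A_s=C\{\max(i,j-s)=0\}$, or the paper's $A_s^+=C\{i\ge 0\text{ or }j\ge s\}$), one checks directly that $A_g$ is just the $S^3$ tower: a generator with $i<0$ and $j\ge g$ (resp.\ $j=g$) would have Alexander grading $j-i>g$, and there are none. So $\HFK(K,g)$ is \emph{not} a summand of $A_g$, and there is no $\Spinc$ structure $\t_g$ on $S^3_{p/q}(K)$ in which $\HFK(K,g)$ appears transparently as a direct summand of the cone homology. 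One could try $A_{g-1}$ instead, but then $\HFK(K,g)$ is related only through a long exact sequence, not a splitting, and the bookkeeping does not close up.

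The paper instead routes through the \emph{zero}-surgery. The key black box is the isomorphism $\HFK(S^3,K,g)\cong HF^+(S^3_0(K),g-1)$ of \cite[Corollary~4.5]{OSzKnot}, which reverses parity when $g>1$. The argument then proceeds in two steps: first use the $p/q$ mapping cone and the Seifert parity constraint to pin down the $\Z/2\Z$-grading of $H_*(A_k^+)$ for \emph{every} $k$ (Theorem~\ref{mainpos} in the positive case, its analogue in the negative case); second, feed this into the far simpler zero-surgery mapping cone $v_i^++h_i^+\co A_i^+\to B^+$ at $i=g-1$ to read off the parity of $HF^+(S^3_0(K),g-1)$.

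Your reading of the hypotheses in the negative case is also off. The condition $2g-1>p/q$ is not about ``isolating'' $A_g$ from neighbouring summands in some $\Spinc$ structure. It enters through the argument of Theorem~\ref{mainneg} and Proposition~\ref{sizeofsurgery}: taking $m=g-1$, the bound $p/q<2m+1=2g-1$ forces $V_{g-1}=0$, which in turn kills the odd piece $(\mathbb{F}_2)^{V_{g-1}}$ in Proposition~\ref{groupofzerosurg} and leaves $HF^+(S^3_0(K),g-1)$ supported purely in even degree. The hypothesis $g>1$ is what makes the parity-reversing isomorphism $\HFK(K,g)\cong HF^+(S^3_0(K),g-1)$ available.
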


This has the following corollary (compare \cite{OSzSeifert}\cite{KMOS}).

\begin{cor}\label{surgcoeff}
If $\mathrm{deg}\,\Delta_K<g(K)$, then for $p/q >0$, $S^3_{p/q}(K)$ is never a positively oriented Seifert fibered space.  If in addition, $g>1$ and $2g-1>|p/q|$, then no $p/q$-surgery along $K$ is Seifert fibered.
\end{cor}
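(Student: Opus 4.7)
The plan is to apply Theorem \ref{knotobst} directly, after observing that the hypothesis $\deg\Delta_K < g$ forces $\HFK(S^3, K, g)$ to be non-trivial in both even and odd gradings. By \eqref{genus}, $\HFK(S^3, K, g)$ is non-zero, while by \eqref{Alexander} its Euler characteristic equals the coefficient $a_g$ of $T^g$ in $\Delta_K(T)$, which vanishes since $\deg\Delta_K < g$. A non-zero graded group with vanishing Euler characteristic necessarily has classes in both parities.

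For the first assertion, suppose $p/q > 0$ and $S^3_{p/q}(K)$ is positively Seifert fibered. Theorem \ref{knotobst} would then force $\HFK(S^3, K, g)$ to vanish in odd degrees, contradicting the parity observation just made. For the second assertion, we must also exclude negatively Seifert fibered surgeries and negative surgery coefficients. Under the additional hypotheses $g > 1$ and $2g-1 > |p/q|$, a positive $p/q$ with a negatively Seifert fibered surgery is ruled out by the second half of Theorem \ref{knotobst}, which would kill the even part of $\HFK(S^3, K, g)$ and again contradict the parity observation.

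For a negative surgery coefficient, we pass to the mirror knot $\bar K$, which has the same Alexander polynomial and the same genus, hence satisfies the same hypotheses. Since $S^3_{p/q}(K)$ is a Seifert fibered space in some orientation if and only if $S^3_{-p/q}(\bar K)$ is a Seifert fibered space in the opposite orientation, and $-p/q > 0$ with $|-p/q| < 2g-1$, the preceding two cases applied to $\bar K$ yield the desired contradiction. The case $p/q = 0$ is excluded since $S^3_0(K)$ is not a rational homology sphere. There is no real obstacle here: Theorem \ref{knotobst} does all the heavy lifting, and the corollary is essentially its parity consequence combined with a mirror-symmetry trick.
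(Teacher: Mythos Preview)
Your proof is correct and follows essentially the same approach as the paper: the paper's own proof is a one-liner invoking the mirror relation $S^3_{p/q}(K)=-S^3_{-p/q}(m(K))$, the Euler characteristic relation \eqref{Alexander}, and Theorem~\ref{knotobst}. You have simply spelled out the details, including the observation that a nonzero group with vanishing Euler characteristic must be supported in both parities, and the explicit reduction of negative $p/q$ to positive $p/q$ via the mirror.
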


\begin{proof}
Note that $S^3_{p/q}(K)=-S^3_{-p/q}(m(K))$, where $m(K)$ denotes the mirror of $K$.  The corollary follows immediately from the Euler characteristic relation (\ref{Alexander}) and Theorem \ref{knotobst}.
\end{proof}

In \cite {Ichi}, Ichihara showed that for a hyperbolic knot $K$, if $|p/q|>3\cdot2^{7/4}g\approx 10.1g$, then $S^3_{p/q}(K)$ is a hyperbolic three-manifold (thus, is not a Seifert fibered space).  Corollary \ref{surgcoeff} basically supplies a bound on the surgery coefficients from the other end when the condition on the degree of the Alexander polynomial $\mathrm{deg}\,\Delta_K<g(K)$ is satisfied.  Concerning the surgeries yielding lens spaces, Goda and Teragaito in \cite{Goda} conjectured that $2g+8\leq |p/q|\leq 4g-1$ if a Dehn surgery with slope $p/q$ on a hyperbolic knot in $S^3$ yields a lens space.  This conjecture is now settled following works of Kronheimer-Mrowka-Ozsv\'ath-Szab\'o \cite{KMOS}, Rasmussen \cite{RasLens} and Greene \cite{Greene}.  It is natural to compare Corollary \ref{surgcoeff} with \cite[Corollary 1.4]{OSzRatSurg}, which says that $S^3_{p/q}(K)$ is not an $L$-space if $2g-1>|p/q|$.

Finally, we present two Seifert fibered surgery obstructions in terms of the four-ball genus of a knot.

\begin{thm}\label{fourballgenus}
Let $K\subset S^3$ be a knot with genus $g>1$.  If $\mathrm{deg}\;\Delta_K<g$ and the four-ball genus $g^*(K)<g$, then $K$ does not admit a Seifert fibered surgery.

\end{thm}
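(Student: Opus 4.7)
The plan is to use Corollary~\ref{surgcoeff} to dispose of the small-slope cases, then leverage the four-ball genus hypothesis through the mapping cone formula to rule out the one remaining case. Suppose $K$ admits a Seifert fibered surgery $S^3_{p/q}(K)$. Corollary~\ref{surgcoeff} already handles every $|p/q|<2g-1$, as well as every positive-slope surgery producing a positively Seifert fibered space. The mirror identity $S^3_{p/q}(K)=-S^3_{-p/q}(m(K))$, together with the invariance of $g$, $\deg\Delta_K$, and $g^*$ under mirroring, reduces the theorem to the single case $p/q\geq 2g-1$ with $S^3_{p/q}(K)$ a negatively oriented Seifert fibered space.

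In this remaining case, the four-ball genus forces the vanishing of certain invariants entering the mapping cone. A genus-$g^*(K)$ surface in $B^4$ bounded by $K$, capped off by the core of the $0$-framed $2$-handle on $K$, yields a closed surface of genus $g^*(K)$ and self-intersection zero inside a $4$-manifold bounded by $S^3_0(K)$. Applying the Heegaard Floer adjunction inequality to this surface, and using $g^*(K)<g$, one obtains $V_s(K)=0$ (and by knot Floer symmetry, $H_s(K)=0$) for every $s$ in the top range $\{g-1,g\}$.

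On the other hand, the mapping cone machinery underlying Theorem~\ref{knotobst} computes the correction terms of $S^3_{p/q}(K)$ from those of the lens space $L(p,q)$ shifted by terms of the form $2\max(V_s,H_s)$. The hypothesis that $S^3_{p/q}(K)$ is negatively Seifert fibered, combined with $\deg\Delta_K<g$ and $p/q\geq 2g-1$, forces at least one such shift with $s\in\{g-1,g\}$ to be strictly positive, contradicting the vanishing from the previous paragraph and completing the proof.

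The main obstacle is this last step: pinning down which specific $V_s$ or $H_s$ the negatively Seifert fibered hypothesis obliges to be nonzero in the top range. This is a refinement of the parity argument used to prove Theorem~\ref{knotobst}, and the hypothesis $g>1$ enters here precisely to ensure that the range $\{g-1,g\}$ is nondegenerate and lies within the scope where the mapping cone gives nontrivial correction-term constraints.
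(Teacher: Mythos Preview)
Your reduction to the case $p/q\ge 2g-1$ with a negative Seifert orientation is logically sound, but the argument you then propose for that remaining case has two genuine gaps.

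First, the claim that $H_s(K)=0$ for $s\in\{g-1,g\}$ is false. The knot Floer symmetry reads $H_s=V_{-s}$, not $H_s=V_s$, so $H_{g-1}=V_{1-g}$ and $H_g=V_{-g}$, which are typically large (recall $H_k\to+\infty$ as $k\to+\infty$). The four-ball genus bound gives $V_{g-1}=V_g=0$, but says nothing about $H_{g-1}$ or $H_g$.

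Second, and more seriously, your final step is---as you yourself concede---unproven. There is no general mechanism by which a negative Seifert orientation forces a particular correction-term shift $\max\{V_s,H_{s'}\}$ to be strictly positive; correction terms of negatively Seifert fibered spaces can perfectly well coincide with lens-space values in individual $\Spinc$ structures, so the contradiction you are hoping for does not materialize from that direction.

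The paper's proof avoids both the case split on $p/q$ and correction terms altogether. For \emph{any} $p/q>0$ with $S^3_{p/q}(K)$ negatively Seifert fibered, Proposition~\ref{groupofzerosurg} gives $HF^+(S^3_0(K),g-1)\cong (A^{odd}_{g-1,\mathrm{red}})_{even}\oplus(\mathbb F_2)^{V_{g-1}}_{odd}$, and the hypothesis $g^*(K)<g$ kills the second summand via $V_{g-1}\le\max\{g^*(K)-(g-1),0\}=0$ from Lemma~\ref{lem:VHMono}; this is exactly Corollary~\ref{negfourballgenus}. Hence $HF^+(S^3_0(K),g-1)$ lies entirely in even degree, and since $g>1$ the parity-reversing isomorphism $\HFK(K,g)\cong HF^+(S^3_0(K),g-1)$ forces $\HFK(K,g)$ into odd degree only. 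Being nontrivial by genus detection and supported in a single parity, it has nonzero Euler characteristic, contradicting $\deg\Delta_K<g$. The key move you are missing is that the four-ball genus enters through the parity structure of $HF^+(S^3_0(K),g-1)$ via $V_{g-1}$, not through correction terms of the surgered manifold.
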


As an application of the theorem, we consider the family of Kinoshita-Terasaka knots $KT_{r,n}$ with $|r|\geq 2$ and $n\neq 0$ that was also used for illustrations in \cite{OSzSeifert}.  These knots all have trivial Alexander polynomial, has genus $|r|$ (\cite{Gabai}\cite{OSzMutation}) and is slice, i.e. $g^*(K)=0$.  Thus, Theorem \ref{fourballgenus} applies and these knots do not admit Seifert fibered surgeries.

\begin{thm}\label{sliceknot}
Let $K\subset S^3$ be a slice knot.  If there are both positive and negative torsion coefficients $t_i(K)$, then $K$ does not admit a Seifert fibered surgery.
\end{thm}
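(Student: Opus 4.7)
The plan is to combine Theorem \ref{PSFS} with a mirror-symmetry reduction, then use a four-dimensional argument based on the slice disk to handle the one case that remains.

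First, since the symmetrized Alexander polynomial is invariant under mirroring, $\Delta_{m(K)}(T)=\Delta_K(T)$, hence $t_i(m(K))=t_i(K)$, so the mirror $m(K)$ is also slice and also has torsion coefficients of both signs. Using $S^3_{p/q}(K)=-S^3_{-p/q}(m(K))$, it suffices to rule out, for both $K$ and $m(K)$ and for each orientation, any Seifert fibered surgery at positive slope $p/q>0$. Applying Theorem \ref{PSFS}: the existence of a negative torsion coefficient rules out positively oriented Seifert fibered surgery at every $p/q>0$, while the existence of a positive torsion coefficient rules out negatively oriented Seifert fibered surgery for every $0<p/q<3$. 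The only remaining possibility is a negatively oriented Seifert fibered surgery at slope $p/q\geq 3$, on either $K$ or $m(K)$.

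To eliminate this last case I plan to exploit the slice disk $D\subset B^4$ directly as a four-dimensional object. It produces a smooth four-manifold $X$ with $\partial X=S^3_{p/q}(K)$, $b_1(X)=0$, and positive-definite rational intersection form agreeing with that of $L(p,q)$. If in addition $S^3_{p/q}(K)$ were negatively Seifert fibered, it would bound a positive-definite plumbed four-manifold $-W(\Gamma)$; gluing $X$ to $-W(\Gamma)$ along the common boundary yields a closed four-manifold with positive-definite intersection form. Feeding this cobordism into the mapping cone description of $HF^+(S^3_{p/q}(K))$ that underlies Theorem \ref{PSFS} should sharpen the sign constraint on the torsion coefficients into the range $p/q\geq 3$, forcing all $t_i(K)$ to share a sign and contradicting the mixed-sign hypothesis.

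The main obstacle is precisely this last step. The obstruction of Theorem \ref{PSFS} for negatively oriented Seifert fibrations, as stated, only works for $0<p/q<3$, and closing the gap for $p/q\geq 3$ demands an extra input beyond the Alexander polynomial. The slice hypothesis must be used as genuinely four-dimensional information — via the slice disk and the definite four-manifold it produces — rather than just through its numerical shadow $g^*(K)=0$, which is why this theorem asks for sliceness rather than merely $g^*(K)<g(K)$ as in Theorem \ref{fourballgenus}.
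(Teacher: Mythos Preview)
Your reduction via mirroring and the use of Theorem \ref{PSFS} to rule out positively oriented Seifert fibered surgeries are correct. But the remainder of the argument has a genuine gap and a misconception about how sliceness enters.

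The gap is that your treatment of negatively oriented Seifert fibered surgeries at slope $p/q\ge 3$ is not a proof but a sketch: you assert that gluing the slice-disk trace to $-W(\Gamma)$ and ``feeding this cobordism into the mapping cone description'' should sharpen the sign constraint, but you give no mechanism by which a closed positive-definite four-manifold would force any statement about the torsion coefficients $t_i(K)$. Donaldson-type diagonalization constrains intersection forms, not Alexander polynomials, and nothing in the mapping cone formalism takes a closed four-manifold as input. As written, this step is wishful.

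The misconception is your claim that sliceness must be used as ``genuinely four-dimensional information'' beyond the numerical fact $g^*(K)=0$. In fact the paper uses sliceness \emph{only} through $g^*(K)=0$, and this already suffices for \emph{every} $p/q>0$, with no need to split off the range $p/q\ge 3$. Corollary \ref{negfourballgenus} gives, for any negatively oriented Seifert fibered $S^3_{p/q}(K)$ with $p/q>0$,
\[
t_i(K)\le \max\{g^*(K)-i,0\}=0\qquad\text{for all }i>0,
\]
once $g^*(K)=0$. The remaining coefficient $t_0(K)$ is handled by showing that sliceness (via $V_0=0$, Lemma \ref{lem:VHMono}) forces $d(S^3_1(K))=0$, hence $d_{\pm 1/2}(S^3_0(K))=\pm\tfrac12$, which combined with Proposition \ref{groupofzerosurg} yields $t_0(K)=-\chi(HF^+_{\mathrm{red}}(S^3_0(K),0))\le 0$. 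Thus all $t_i(K)\le 0$, contradicting the mixed-sign hypothesis. The reason the hypothesis is $g^*(K)=0$ rather than $g^*(K)<g(K)$ is simply that the bound in Corollary \ref{negfourballgenus} is $\max\{g^*(K)-i,0\}$, which only forces nonpositivity for \emph{all} $i>0$ when $g^*(K)=0$.
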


We checked the list of all slice knots with fewer than twelve crossings. By applying the above obstructions (Theorem \ref{fourballgenus},\ref{sliceknot}), we found that among the 76 alternating slice knots, only the knot $6_1$ and $10_3$ could possibly admit a Seifert fibered surgery, and the knot $6_1$, also known as the Stevedore's knot, indeed admitted one.  Also, among the 81 non-alternating slice knots, there are only 17 of them could potentially admit a Seifert fibered surgery.

\vspace{5pt}\noindent{\bf Acknowledgements.} I wish to thank Zolt\'an Szab\'o, Yi Ni, Paul Kirk and Charles Livingston for interesting discussions on this project.  I am especially grateful to Yi for inspiring me the idea of looking at the size of the surgery coefficient $p/q$, as well as his help on several technical difficulties in this article.  The author is supported by a Simons Postdoctoral Fellowship.

\section{Rational surgeries and the mapping cones}

In this section, we recall the rational surgery formula of Ozsv\'ath and Szab\'o \cite{OSzRatSurg}.  For simplicity, we will use $\mathbb{F}_2=\Z/2\Z$ coefficients for Heegaard Floer homology throughout this paper.

Given a knot $K$ in an integer homology sphere $Y$. Let $C=CFK^{\infty}(Y,K)$ be the knot Floer chain complex of $(Y,K)$. There are chain complexes
$$A^+_k=C\{i\ge0 \text{ or }j\ge k\},\quad k\in\mathbb Z$$
and $B^+=C\{i\ge0\}\cong CF^+(Y)$. As in \cite{OSzIntSurg}, there are chain maps
$$v_k,h_k\co A^+_k\to B^+.$$

Let $$\mathbb A_i^+=\bigoplus_{s\in\mathbb Z}(s,A^+_{\lfloor\frac{i+ps}q\rfloor}(K)),\mathbb B_{i}^+=\bigoplus_{s\in\mathbb Z}(s,B^+).$$
For a given rational number $p/q\neq 0$, define maps
$$v^+_{\lfloor\frac{i+ps}q\rfloor}\co (s,A^+_{\lfloor\frac{i+ps}q\rfloor}(K))\to (s,B^+),\quad h^+_{\lfloor\frac{i+ps}q\rfloor}\co (s,A^+_{\lfloor\frac{i+ps}q\rfloor}(K))\to (s+1,B^+).$$
Adding these up, we get a chain map
$$D_{i,p/q}^+\co\mathbb A_i^+\to \mathbb B_i^+,$$
with
$$D_{i,p/q}^+\{(s,a_s)\}_{s\in\mathbb Z}=\{(s,b_s)\}_{s\in\mathbb Z},$$
where
$$b_s=v^+_{\lfloor\frac{i+ps}q\rfloor}(a_s)+h^+_{\lfloor\frac{i+p(s-1)}q\rfloor}(a_{s-1}).$$

\begin{thm}[Ozsv\'ath--Szab\'o]\label{OSmappingcone}
Let $\mathbb X^+_{i,p/q}$ be the mapping cone of $D_{i,p/q}^+$, then there is a relatively graded isomorphism of groups
\begin{equation}\label{rationalsurgery}
H_*(\mathbb X^+_{i,p/q})\cong HF^+(Y_{p/q}(K),i).
\end{equation}
\end{thm}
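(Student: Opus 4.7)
The plan is to reduce Theorem \ref{OSmappingcone} to the integer surgery mapping cone formula of \cite{OSzIntSurg}, which computes $HF^+(Y_n(K),i)$ as the homology of a mapping cone of the same form but with $n \in \Z$ in place of $p/q$.

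First, I would realize $Y_{p/q}(K)$ as an integer surgery on an auxiliary link. The cleanest way is to attach an unknotted meridian $c$ of $K$ and consider the two-component link $K\cup c$; equivalently, one can attach a chain of meridians whose integer framings realize the continued fraction expansion of $p/q$. By iterated slam-dunks, surgery on this augmented link with a suitable integer framing matrix produces exactly $Y_{p/q}(K)$.

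Second, I would apply the integer surgery mapping cone formula iteratively across the components of this augmented link (or, equivalently, the link surgery formula). This expresses $HF^+(Y_{p/q}(K),i)$ as the homology of a hypercube of chain complexes indexed by the $\Spinc$-lattice of the enlarged integer surgery, built out of the basic pieces $A^+_k$ and $B^+$ and the maps $v^+_k$, $h^+_k$.

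Third, I would perform algebraic cancellation on the hypercube. Because the auxiliary components are unknots in lens spaces, many of the structure maps associated with them are quasi-isomorphisms, and the corresponding source and target subcomplexes can be simultaneously removed without altering the homotopy type. After all such cancellations, the remaining generators are naturally indexed by pairs $(s, A^+_k)$ and $(s, B^+)$ with $k=\lfloor(i+ps)/q\rfloor$, and the surviving differential restricts to $v^+_k$ and $h^+_k$, reproducing $D^+_{i,p/q}$.

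The main obstacle is bookkeeping the $\Spinc$ decomposition: identifying how a fixed $\Spinc$ structure on $Y_{p/q}(K)$ lifts to the integer-surgery link, tracking its interaction with the slam-dunk framings, and verifying that this interaction produces exactly the floor function $\lfloor(i+ps)/q\rfloor$ in the Alexander grading of the surviving $A^+$ summands. A secondary technicality is to confirm that the surviving differentials are genuinely $v^+_k$ and $h^+_k$, rather than $U$-power corrections thereof, which requires a careful analysis of the boundary maps in the cancellation step. Finally, the relative grading claim must be checked by comparing the Maslov grading shifts on both sides against the cobordism induced by the surgery handle.
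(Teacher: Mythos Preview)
The paper does not give its own proof of this statement: Theorem~\ref{OSmappingcone} is quoted from Ozsv\'ath--Szab\'o's rational surgery paper \cite{OSzRatSurg} and is used as a black box throughout. So there is no ``paper's proof'' for your proposal to be compared against.

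That said, your sketch is a fair outline of the argument actually carried out in \cite{OSzRatSurg}. There one realizes $Y_{p/q}(K)$ by Morse surgery on $K$ in $Y\#-L(q,r)$ (equivalently, by adding a meridional chain realizing the continued fraction of $p/q$), applies the integer surgery formula of \cite{OSzIntSurg} in that setting, and then simplifies using the knot Floer complex of the connected sum with the induced knot in the lens space; the floor function $\lfloor(i+ps)/q\rfloor$ emerges from the $\Spinc$ bookkeeping exactly as you anticipate. Your concerns about verifying that the surviving maps are genuinely $v^+_k$ and $h^+_k$ and about the relative grading are the right places to be careful, and \cite{OSzRatSurg} handles them via the large-surgery identification and the absolute grading package. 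For the purposes of this paper, however, a citation suffices.
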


The mapping cone can be as well applied to the zero-surgery of a knot.  Specifically, the integer surgeries long exact sequence (of \cite[Theorem 10.19]{OSzAnn2}) shows that $HF^+(Y_0(K),i)$ is identified with the homology of the mapping cone of
\begin{equation}\label{zerosurgery}
v^+_i+h^+_i\co A^+_i\to B^+.
\end{equation}

\begin{rem}
Let $$\mathfrak D_{i,p/q}^+\co H_*(\mathbb A_i^+)\to H_*(\mathbb B_i^+)$$
be the map induced by $D_{i,p/q}^+$ on homology. Then the mapping cone of $\mathfrak D_{i,p/q}^+$ is quasi-isomorphic to $\mathbb X^+_{i,p/q}$.
By abuse of notation, we do not distinguish $A_k^+,\mathbb A_i^+,B^+,\mathbb B_i^+$ from their homology, and do not distinguish $D_{i,p/q}^+$ from $\mathfrak D_{i,p/q}^+$.
\end{rem}

\begin{rem}\label{gradingshift}
Although not explicitly exhibited in \cite{OSzIntSurg}\cite{OSzRatSurg}, we can assign a $\Z/2\Z$-grading on $\mathbb X^+_{i,p/q}$ so that the isomorphism (\ref{rationalsurgery}) respects $\Z/2\Z$-grading - by keeping the original $\Z/2\Z$-grading on $\mathbb A_i^+$ and reversing the original $\Z/2\Z$-grading on $\mathbb B_i^+$.  Similarly, we can assign a $\Z/2\Z$-grading on the mapping cone of (\ref{zerosurgery}) so that the isomorphism to $HF^+(Y_0(K),i)$ respects $\Z/2\Z$-grading - by reversing the original $\Z/2\Z$-grading on $A_i^+$ and keeping the original $\Z/2\Z$-grading on $B_i^+$.

\end{rem}

For a rational homology three-sphere $Y$ with a $\Spinc$ structure $\s$, $HF^+(Y,\s)$ can be decomposed as the direct sum of two groups: The first group is the image of $HF^\infty(Y,\s)$ in $HF^+(Y,\s)$, whose minimal absolute $\mathbb{Q}$ grading is an invariant of $(Y,\s)$ and is denoted by $d(Y,\s)$, the {\it correction term} \cite{OSzAbGr}; the second group is the quotient modulo the above image and is denoted by $HF_{\mathrm{red}}(Y,\s)$.  Altogether, we have $$HF^+(Y,\s)=\mathcal{T}^+_{d(Y,\s)}\oplus HF_{\mathrm{red}}(Y,\s).$$

For a knot $K\subset S^3$, let $A_k^T=U^nA^+_k$ for $n\gg0$, then $A_k^T\cong\mathcal T^+$.
Let $D_{i,p/q}^T$ be the restriction of $D_{i,p/q}^+$ on
\begin{equation}\label{freepart}
\mathbb A_i^T=\bigoplus_{s\in\mathbb Z}(s,A^T_{\lfloor\frac{i+ps}q\rfloor}(K)).
\end{equation}

Since $v^+_k,h^+_k$ are isomorphisms at sufficiently high gradings and are $U$--equivariant, $v^+_k|A_k^T$ is modeled on multiplication by $U^{V_k}$ and $h^+_k|A_k^T$ is modeled on multiplication by $U^{H_k}$, where $V_k,H_k\ge0$. We list some of the properties of $V_k$ and $H_k$ here, and refer their proofs to \cite{NiWu}\cite{RasThesis}.

\begin{lem}\label{lem:VHMono}\cite{RasThesis}
The nonnegative integers $V_k$ and $H_k$ satisfy the following properties:
\begin{enumerate}
\item $V_k=H_{-k}$.
\item For $k>0$, we have $V_k+1\geq V_{k-1}\geq V_k$.
\item $V_k \leq \mathrm{max}\{0, g^*(K)-k\}$, where $g^*(K)$ is the slice genus of $K$.
\end{enumerate}

\end{lem}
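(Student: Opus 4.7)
The three parts of the lemma can be proven by combining the algebraic structure of $CFK^\infty(S^3,K)$ with geometric input for the last part.

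For (1), $V_k=H_{-k}$: I would invoke the conjugation symmetry of the knot Floer complex, which supplies a filtered chain homotopy equivalence of $CFK^\infty$ with itself interchanging the filtrations $i$ and $j$. Under this symmetry $A_k^+=C\{i\geq 0\text{ or }j\geq k\}$ is identified with a copy of $A_{-k}^+$, and the vertical quotient map $v_k$ corresponds to the horizontal quotient map $h_{-k}$. Tracking the induced $U$-action on the infinite towers in homology then yields the equality.

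For (2), the monotonicity $V_{k-1}\geq V_k$ follows from the inclusion $\iota\colon A_k^+\hookrightarrow A_{k-1}^+$: a direct computation shows that $v_{k-1}\circ\iota=v_k$, since both are quotient maps by the same subcomplex $C\{i<0,\,j\geq k\}$. On the infinite towers the map $\iota$ is $U$-equivariant and grading-preserving, so the bottom-of-tower element of $A_k^+$ (at grading $-2V_k$) must lie in the tower of $A_{k-1}^+$, whose bottom sits at grading $-2V_{k-1}$; this forces $-2V_k\geq -2V_{k-1}$, i.e., $V_{k-1}\geq V_k$. For the upper bound $V_{k-1}\leq V_k+1$, I would analyze the short exact sequence
$$0\to A_k^+\to A_{k-1}^+\to C\{i<0,\,j=k-1\}\to 0,$$
whose quotient is supported on a single filtration row. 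The induced long exact sequence, combined with the structure of $CFK^\infty$ and the identification $V_k=H_{-k}$ from part (1), constrains the tower cokernel of $\iota_*$ to at most one dimension, giving the claimed one-step bound.

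For (3), the slice-genus bound requires four-dimensional input. Let $F\subset B^4$ be a smoothly embedded oriented surface of genus $g^*(K)$ with $\partial F=K$, and consider the cobordism from $S^3$ to $S^3_n(K)$ (for $n\gg 0$) obtained by attaching a two-handle along $K$. The union of $F$ with the core of the handle gives a closed surface $\widehat F$ inside this cobordism. Applying the Ozsv\'ath--Szab\'o adjunction-type inequality for correction terms from \cite{OSzAbGr} to $\widehat F$ produces a bound on $d(S^3_n(K),i)=d(L(n,1),i)-2V_k$ (for the appropriate correspondence between $i$ and $k$), which translates directly into $V_k\leq\max\{0,\,g^*(K)-k\}$.

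The main obstacle is part (3), which requires a genuinely four-dimensional ingredient — the slice surface and its associated adjunction argument — whereas parts (1)--(2) are algebraic manipulations on $CFK^\infty$. Within (2), the upper bound $V_{k-1}\leq V_k+1$ is more delicate than the lower bound and relies on the specific single-row structure of the quotient complex.
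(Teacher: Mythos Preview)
The paper itself gives no proof of this lemma; it merely cites \cite{NiWu} and \cite{RasThesis}. So there is no in-paper argument to compare against, and your outline should be judged on its own merits.

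Parts (1) and (3) are correct in spirit: conjugation symmetry is the standard route to $V_k=H_{-k}$, and the slice surface capped off by the core of the $2$--handle is indeed the geometric input behind the genus bound (this is essentially Rasmussen's argument, and it also underlies the correction-term formula you quote).

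Part (2), however, contains a genuine error of direction. The complexes $A_k^+=C\{i\ge 0\text{ or }j\ge k\}$ are \emph{quotients} of $CFK^\infty$, not subcomplexes, so there is no inclusion $\iota\colon A_k^+\hookrightarrow A_{k-1}^+$. The map goes the other way: since $C\{i<0,\,j<k-1\}\subset C\{i<0,\,j<k\}$, one obtains a projection $\pi\colon A_{k-1}^+\twoheadrightarrow A_k^+$ with $v_k\circ\pi=v_{k-1}$, and the relevant short exact sequence is
\[
0\longrightarrow C\{i<0,\,j=k-1\}\longrightarrow A_{k-1}^+\stackrel{\pi}{\longrightarrow} A_k^+\longrightarrow 0.
\]
With the correct direction, monotonicity $V_{k-1}\ge V_k$ follows because on towers $\pi_*$ is multiplication by $U^{V_{k-1}-V_k}$. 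For the upper bound, the key fact (which you gesture at but do not state) is that $U$ annihilates the kernel $C\{i<0,\,j=k-1\}$, since $U$ drops $j$ below $k-1$. Hence if the bottom-of-tower generator $\theta_{k-1}\in H_*(A_{k-1}^+)$ lies in the image of the kernel, then $U^{-1}\theta_{k-1}$ cannot (otherwise $\theta_{k-1}=U\cdot U^{-1}\theta_{k-1}$ would vanish), so $\pi_*$ kills at most one tower element and $V_{k-1}-V_k\le 1$. Your appeal to part (1) in this step is a red herring; the one-step bound is purely a statement about the $U$--module structure of the short exact sequence.
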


It is clear that $V_k=0$ when $k\ge g^*$ and $H_k=0$ when $k\le -g^*$; also $V_k\to +\infty$ as $k\to -\infty$ and $H_k\to +\infty$ as $k\to+\infty$.

\begin{lem}\label{lem:V0H0}
$V_0=H_0$. Moreover, $V_k> H_k$ if $k<0$ and $V_k< H_k$ if $k>0$.
\end{lem}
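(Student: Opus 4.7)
The first assertion $V_0 = H_0$ is immediate from Lemma~\ref{lem:VHMono}(1) specialised to $k = 0$, so the content of the lemma is the pair of strict inequalities. By the same relation $V_k = H_{-k}$, the statements $V_k > H_k$ for $k<0$ and $V_k < H_k$ for $k>0$ are exchanged under the substitution $k \leftrightarrow -k$, so I would focus on the latter. After rewriting $H_k = V_{-k}$, the claim becomes $V_{-k} > V_k$ for every $k > 0$. The monotonicity in Lemma~\ref{lem:VHMono}(2) already gives $V_{-k} \ge V_{-1} \ge V_0 \ge V_1 \ge V_k$ for $k \ge 1$, so the general case reduces to the single strict inequality $V_{-1} > V_1$, equivalently to ruling out the degenerate pattern $V_{-1} = V_0 = V_1$.

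To establish $V_{-1} > V_1$, I would unpack the chain-level definitions of the two maps $A^+_k \to B^+$: the map $v^+_k$ is the natural projection $A^+_k \twoheadrightarrow C\{i \ge 0\} = B^+$, whereas $h^+_k$ is the composition $A^+_k \twoheadrightarrow C\{j \ge k\} \xrightarrow{U^k} C\{j \ge 0\} \stackrel{\iota}{\cong} C\{i \ge 0\} = B^+$, in which $\iota$ is the conjugation involution on $CFK^\infty(S^3,K)$ swapping the two filtrations. The involution $\iota$ intertwines $v^+_k$ with $h^+_{-k}$, which is precisely how $V_k = H_{-k}$ of Lemma~\ref{lem:VHMono}(1) is established. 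The additional input, absent when $k = 0$, is the explicit multiplication by $U^k$ in the factorisation of $h^+_k$: for $k \ne 0$ this $U^k$ factor contributes strictly to the $U$-power at the tower, which is what should yield $H_k > V_k$ (and its $k<0$ counterpart).

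The main obstacle I anticipate is to verify that the $U^k$-contribution genuinely survives at the level of homology and cannot be absorbed by a chain homotopy between the two factorisations. In particular, in the potential degenerate configuration $V_{-1} = V_0 = V_1$ one has to exhibit an $\iota$-equivariant incompatibility in $CFK^\infty(S^3,K)$ — essentially a statement that $v^+_0$ and $h^+_0$ agree on the tower precisely because of the $\iota$-fixed structure at $k=0$, a coincidence that breaks as soon as $k \ne 0$. This is the kind of careful chain-level bookkeeping carried out in \cite{RasThesis} and \cite{NiWu}, and I would follow that model.
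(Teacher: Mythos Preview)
Your reduction is clean: citing Lemma~\ref{lem:VHMono}(1) for $V_0=H_0$ is legitimate, and the observation that monotonicity collapses the whole family of strict inequalities to the single assertion $V_{-1}>V_1$ is correct, since $V_{-k}\ge V_{-1}$ and $V_1\ge V_k$ for $k\ge 1$.

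The gap is that you never actually establish $V_{-1}>V_1$. Your plan is to track the explicit $U^k$ factor in the factorisation of $h_k^+$ and argue that it ``genuinely survives'' on the tower, but you yourself flag that this could be absorbed by a chain homotopy, and you do not carry out the verification. In fact the heuristic is a bit off: the symmetry $V_k=H_{-k}$ already accounts for the $U$-shift built into $h_k^+$, so comparing $V_k$ with $H_k$ is really comparing $V_k$ with $V_{-k}$, and no residual $U^k$ factor separates these two numbers at the chain level. Whatever argument you would extract from \cite{RasThesis} or \cite{NiWu} would have to be of a different nature, and none is supplied here.

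The paper avoids chain-level bookkeeping entirely and uses a topological input instead. By the zero-surgery mapping cone~(\ref{zerosurgery}), $HF^+(S^3_0(K),i)$ is the homology of the cone of $v_i^++h_i^+\colon A_i^+\to B^+$. For a non-torsion $\Spinc$ structure $i\ne 0$ this group equals $HF^+_{\mathrm{red}}(S^3_0(K),i)$ and in particular is finitely generated over $\mathbb{F}_2$. On the tower $A_i^T\cong\mathcal T^+$ the map $v_i^++h_i^+$ is multiplication by $U^{V_i}+U^{H_i}$; if $V_i=H_i$ this vanishes over $\mathbb{F}_2$, producing an infinite-rank contribution to the cone, a contradiction. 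Hence $V_i\ne H_i$ for every $i\ne 0$, and combining this with the weak inequality $V_k\le V_0\le V_{-k}=H_k$ from monotonicity gives the strict inequality directly. This one-line use of~(\ref{zerosurgery}) replaces the unfinished chain-level analysis in your proposal.
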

\begin{proof}
If $$(\Sigma,\mbox{\boldmath${\alpha}$},
\mbox{\boldmath$\beta$},w,z)$$ is a doubly pointed Heegaard
diagram for $(S^3,K)$, then
$$(-\Sigma,\mbox{\boldmath${\beta}$},
\mbox{\boldmath$\alpha$},z,w)$$
is also a Heegaard diagram for $(S^3,K)$. Hence the roles of $i,j$ can be interchanged. It follows that $v^+_0$ is equivalent to $h^+_0$, hence $V_0=H_0$.  For the second part, note that $HF^+(S^3_0(K),i)=HF^+_{\mathrm{red}}(S^3_0(K),i)$ for a non-torsion $\Spinc$ structure $i$.  By the mapping cone formula (\ref{zerosurgery}), we must have $V_i\neq H_i$ for all non-zero integers $i$.  The inequalities then follow from $V_0=H_0$ and Lemma \ref{lem:VHMono}.

\end{proof}

\begin{lem}\label{lem:>0Surj}
Suppose $p/q>0$. Then the map $D_{i,p/q}^T$ is surjective.
\end{lem}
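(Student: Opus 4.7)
My plan is to prove surjectivity by directly constructing preimages, using two consequences of the hypothesis $p/q>0$.

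The first is algebraic: on homology, each $A_k^T\cong\mathcal T^+$ and $B^+\cong\mathcal T^+$, and the restrictions $v^+_k|_{A_k^T}$, $h^+_k|_{A_k^T}$ act as multiplication by $U^{V_k}$, $U^{H_k}$. For any $n\ge 0$, multiplication by $U^n$ on $\mathcal T^+$ is surjective (its kernel being the $n$--dimensional $\mathbb{F}_2$--subspace spanned by $1,U^{-1},\ldots,U^{-(n-1)}$). Therefore, at every $s$, the equation
\[
U^{V_{\lfloor(i+ps)/q\rfloor}}a_s+U^{H_{\lfloor(i+p(s-1))/q\rfloor}}a_{s-1}=b_s
\]
can always be solved for $a_s$ given $a_{s-1}$, or for $a_{s-1}$ given $a_s$. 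The second consequence is asymptotic: since $p/q>0$, the floors $\lfloor(i+ps)/q\rfloor$ tend to $+\infty$ as $s\to+\infty$ and to $-\infty$ as $s\to-\infty$, so by Lemma~\ref{lem:VHMono} together with the concluding remark before Lemma~\ref{lem:V0H0}, one has $V_{\lfloor(i+ps)/q\rfloor}=0$ for all $s$ sufficiently large, $H_{\lfloor(i+p(s-1))/q\rfloor}=0$ for all $s$ sufficiently negative, and both quantities tend to $+\infty$ on the opposite ends.

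Given $\{b_s\}\in\mathbb B_i^+$ of finite support, I would fix any base index $s_0$, start from any chosen value $a_{s_0}$, and iterate forward by writing $U^{V_{\lfloor(i+ps)/q\rfloor}}a_s=b_s+U^{H_{\lfloor(i+p(s-1))/q\rfloor}}a_{s-1}$ and picking any preimage under the surjection $U^{V_{\lfloor(i+ps)/q\rfloor}}$, and symmetrically backward. This produces a two-sided sequence $\{a_s\}$ satisfying every equation.

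The crucial point to verify is that $\{a_s\}$ has finite support, hence lies in the direct sum $\mathbb A_i^T$. Once $s$ exceeds both the support of $\{b_s\}$ and the threshold where $V_{\lfloor(i+ps)/q\rfloor}=0$, the forward recursion collapses to $a_s=U^{H_{\lfloor(i+p(s-1))/q\rfloor}}a_{s-1}$, so $a_{s_0+k}$ is a fixed element of $\mathcal T^+$ multiplied by $U$ raised to a sum of $H$'s; because those $H$'s tend to $+\infty$, the exponent eventually exceeds the finite $U$-height of that element, forcing $a_{s_0+k}=0$ for $k$ sufficiently large. The backward tail vanishes symmetrically by iterating $a_{s-1}=U^{V_{\lfloor(i+ps)/q\rfloor}}a_s$ and using $V_{\lfloor(i+ps)/q\rfloor}\to+\infty$ as $s\to-\infty$. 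The main conceptual obstacle is to recognize that the hypothesis $p/q>0$ is precisely what makes both tails terminate: it forces the floors to tend to $+\infty$ on one side (where $V$ eventually vanishes) and to $-\infty$ on the other (where $H$ eventually vanishes). Lose this positivity, and one of the two simplifying regimes disappears, and the termination argument breaks.
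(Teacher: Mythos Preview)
Your proposal is correct and follows essentially the same approach as the paper: recursively solve $U^{V}a_s+U^{H}a_{s-1}=b_s$ outward from a base index, then use the asymptotics of $V_k,H_k$ (coming from $p/q>0$) to show the constructed sequence has finite support. The paper starts specifically at $s=0,-1$ and phrases the termination via $H_{\lfloor(i+p(s-1))/q\rfloor}-V_{\lfloor(i+ps)/q\rfloor}\to+\infty$, whereas you note the slightly sharper fact that $V_{\lfloor(i+ps)/q\rfloor}=0$ for $s\gg 0$ so the forward recursion becomes the unique formula $a_s=U^{H}a_{s-1}$; this is a cosmetic difference, not a different method.
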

The exact same lemma is proved in \cite[Lemma 2.9]{NiWu}.  Nevertheless, we include the proof here so that the reader may have an opportunity to get used to our notations.

\begin{proof}
Let $0\le i\le p-1$. Suppose
$$\mbox{\boldmath${\eta}$}=\{(s,\eta_s)\}_{s\in\mathbb Z}\in\mathbb B^+_i.$$
Let
$$\xi_{-1}=U^{-H_{\lfloor\frac{i+p(-1)}q\rfloor}}\eta_{0},\quad \xi_{0}=0.$$
For other $s$, let
$$
\xi_{s}=\bigg\{
\begin{array}{ll}
U^{-V_{\lfloor\frac{i+ps}q\rfloor}}(\eta_{s}-U^{H_{\lfloor\frac{i+p(s-1)}q\rfloor}}\xi_{s-1}),&\text{if }s>0,\\
U^{-H_{\lfloor\frac{i+ps}q\rfloor}}(\eta_{s+1}-U^{V_{\lfloor\frac{i+p(s+1)}q\rfloor}}\xi_{s+1}),&\text{if }s<-1.
\end{array}
$$
By the definition of direct sum, $\eta_s=0$ when $|s|\gg0$. Using the facts that
$$H_{\lfloor\frac{i+p(s-1)}q\rfloor}-V_{\lfloor\frac{i+ps}q\rfloor}\to+\infty, \text{ as }s\to+\infty,$$
and
$$V_{\lfloor\frac{i+p(s+1)}q\rfloor}-H_{\lfloor\frac{i+ps}q\rfloor}\to+\infty, \text{ as }s\to-\infty,$$
we see that $\xi_s=0$ when $|s|\gg0$. So $\mbox{\boldmath${\xi}$}=\{(s,\xi_s)\}_{s\in\mathbb Z}\in\mathbb A_i^T$.
Clearly
$$D_{i,p/q}^T(\mbox{\boldmath${\xi}$})=\mbox{\boldmath${\eta}$}.$$
\end{proof}

\section{Seifert fibered surgeries}

For a three-manifold $Y=-\partial W(\Gamma)$, where $\Gamma$ is a negative-definite graph with at most one bad point, $HF^+(Y)$ can be explicitly calculated in terms of the graph $\Gamma$.  The part of that calculation which we will use in the present paper can be summarized as follows:

\begin{thm}\label{OSzPlumb}\cite{OSzPlumb}
Let $\Gamma$ be a negative-definite graph with at most one bad point.  Then, $HF^+(-\partial W(\Gamma))$ is supported in even dimensions.  Moreover, if $\Gamma$ has no bad points, then $HF^+_{\mathrm{red}}(-\partial W(\Gamma))=0$.

\end{thm}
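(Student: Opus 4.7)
The plan is to follow the approach of Ozsv\'ath--Szab\'o \cite{OSzPlumb}, computing $HF^+(-\partial W(\Gamma))$ from a combinatorial model on the plumbing lattice. Let $L = H_2(W(\Gamma); \Z)$ with its negative-definite intersection form and let $\mathrm{Char}(L) \subset L^* = \mathrm{Hom}(L,\Z)$ denote the set of characteristic vectors. For a $\Spinc$ structure $\t$ on $-\partial W(\Gamma)$ represented by a coset $[K_0]$ of $2L$ inside $\mathrm{Char}(L)$, assign to each $K \in [K_0]$ the rational grading
$$\mathrm{gr}(K) = \frac{K^2 + |V(\Gamma)|}{4}.$$
These gradings organize into N\'emethi's graded root; the key combinatorial theorem to be established is that, as a graded $\Z[U]$-module, the homology of this root is isomorphic to $HF^+(-\partial W(\Gamma), \t)$ whenever $\Gamma$ has at most one bad point.

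I would establish this isomorphism by induction on the number of vertices, using the integer surgery exact triangle at a carefully chosen vertex. The base case is a one-vertex plumbing, i.e., a lens space, where the identification is classical. The inductive step replaces $\Gamma$ by the two graphs appearing in the triangle (obtained by deleting a vertex or modifying an adjacent weight), chosen so that each still has at most one bad point; one then matches the Floer-theoretic exact triangle with a corresponding exact triangle of graded roots. For the second assertion, when $\Gamma$ has no bad points a parallel induction starting from a leaf vertex $v$ with $-m_v \geq 1$ shows that $-\partial W(\Gamma)$ is an L-space: both simpler graphs in the triangle still have no bad points, and the standard rank inequality forces the third term to be an L-space as well, whence $HF^+_{\mathrm{red}} = 0$.

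The parity claim reduces to a brief lattice computation. For any two characteristic vectors $K$ and $K' = K + 2v$ in the same coset of $2L$,
$$\mathrm{gr}(K') - \mathrm{gr}(K) = K \cdot v + v \cdot v \in 2\Z,$$
since $K$ characteristic means $K \cdot v \equiv v \cdot v \pmod 2$. Thus every combinatorial generator of $HF^+(-\partial W(\Gamma), \t)$ sits in the same $\Z/2$-coset of $\Q$; combined with the fact that the top generator's grading $d(-\partial W(\Gamma), \t)$ reduces to the right parity (via Ozsv\'ath--Szab\'o's $d$-invariant formula as a maximum of $\mathrm{gr}(K)$ over special vectors), we conclude that $HF^+$ is supported in even dimensions.

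The main obstacle is the inductive identification itself: one must choose the reduction vertex so that the ``at most one bad point'' hypothesis is preserved across both terms of the exact triangle, and verify that the surgery cobordism maps agree with the combinatorial connecting maps between graded roots. Arranging both of these in a single induction, especially controlling how bad points can migrate when a vertex is blown down or an adjacent weight is decremented, is the delicate technical core of Ozsv\'ath--Szab\'o's theorem, and is where I expect the bulk of the work to lie.
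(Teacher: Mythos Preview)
The paper does not prove this theorem at all: it is quoted verbatim from \cite{OSzPlumb} and used as a black box, so there is no ``paper's own proof'' to compare your attempt against. Your sketch is a faithful outline of the argument in \cite{OSzPlumb} (lattice of characteristic vectors, induction via the surgery triangle, the parity computation $\mathrm{gr}(K+2v)-\mathrm{gr}(K)\in 2\Z$), and as such it is a reasonable summary of the cited result rather than new content for this paper. One minor anachronism: invoking N\'emethi's graded roots mixes in later machinery; the original \cite{OSzPlumb} works directly with the module $\mathbb{H}^+(\Gamma)$ of finitely supported functions on $\mathrm{Char}(L)$ satisfying adjunction inequalities, though the two formulations are equivalent.
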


\subsection{Positively oriented Seifert fibered space}

The following statement is a generalization of \cite[Theorem 3.4]{OSzSeifert} and is essentially the same as \cite[Theorem 8.9]{KMOS}.  We present a proof using the mapping cone formulas discussed in the previous section.

\begin{thm}\label{mainpos}
Let $K \subset S^3$ be a knot in the three-sphere, and suppose that there is a rational number $p/q>0$ and a negative definite or semi-definite graph $\Gamma$ with only one bad point with the property that $$S^3_{p/q}(K)\cong-\partial W(\Gamma),$$ then for any $k\in \Z$, all elements of $H_*(A_k^+)$ have even $\Z/2\Z$-grading.  Here, $A_k^+=C\{i\ge0 \text{ or }j\ge k\}$ denotes the chain complex associated to the knot $K\subset S^3$.

\end{thm}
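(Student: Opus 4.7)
The plan is to run the mapping-cone long exact sequence in $\Z/2\Z$-grading, extract a parity constraint on $H_*(\A^+_i)$, and then unpack the direct-sum decomposition to each summand $A^+_k$.

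First, Theorem \ref{OSmappingcone} together with Remark \ref{gradingshift} provides a $\Z/2\Z$-graded isomorphism $H_*(\X^+_{i,p/q}) \cong HF^+(S^3_{p/q}(K),i)$. Since $S^3_{p/q}(K)=-\partial W(\Gamma)$ and Theorem \ref{OSzPlumb} (applied in the definite case, with the standard reduction covering the semi-definite one) guarantees that $HF^+(-\partial W(\Gamma))$ is supported in even $\Z/2\Z$-grading, we conclude $H_n(\X^+_{i,p/q})=0$ for every odd $n$ in the mapping-cone grading.

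Next, each summand $B^+=CF^+(S^3)$ of $\B^+_i$ has homology $\mathcal{T}^+$ concentrated in even absolute original grading. Remark \ref{gradingshift} \emph{reverses} the grading on $\B^+_i$ inside $\X^+_{i,p/q}$, so $H_*(\B^+_i)$ sits entirely in the odd parity of the mapping-cone grading, giving $H_n(\B^+_i)=0$ for every even $n$. Now the mapping-cone long exact sequence, in the mapping-cone $\Z/2\Z$-grading,
$$\cdots \to H_n(\B^+_i) \to H_n(\X^+_{i,p/q}) \to H_n(\A^+_i) \xrightarrow{D^+_*} H_{n-1}(\B^+_i) \to \cdots$$
pinched at $n=1$ yields
$$0 = H_1(\X^+_{i,p/q}) \to H_1(\A^+_i) \xrightarrow{D^+_*} H_0(\B^+_i) = 0,$$
forcing $H_1(\A^+_i)=0$. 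By Remark \ref{gradingshift} the mapping-cone grading on $\A^+_i$ coincides with its original $\Z/2\Z$-grading, so $H_*(\A^+_i)$ is concentrated in even original $\Z/2\Z$-grading. Since $\A^+_i = \bigoplus_s(s,A^+_{\lfloor(i+ps)/q\rfloor})$ is a $\Z/2\Z$-graded direct sum and, for $p/q>0$, the indices $\lfloor(i+ps)/q\rfloor$ sweep all of $\Z$ as $i$ and $s$ vary, every $H_*(A^+_k)$ is supported in even $\Z/2\Z$-grading.

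The principal obstacle is the $\Z/2\Z$-grading bookkeeping: one has to verify that the reversal prescribed by Remark \ref{gradingshift} puts $H_*(\B^+_i)$ in the opposite parity from $H_*(\X^+_{i,p/q})$, so that the long exact sequence at $n=1$ sandwiches $H_1(\A^+_i)$ between two zeros. Once the parity assignments are pinned down, the long-exact-sequence chase and the unpacking to individual summands are routine; the surjectivity of $D^+_*$ supplied by Lemma \ref{lem:>0Surj} hangs in the background but is not strictly needed for this step.
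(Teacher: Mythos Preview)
Your argument is correct and is a cleaner packaging of the same underlying idea as the paper's proof. The paper argues by contradiction: it takes a putative odd class $\theta\in A^{odd}_{k,\mathrm{red}}$, uses the surjectivity of $D^T_{i,p/q}$ (Lemma~\ref{lem:>0Surj}) to correct $\boldsymbol{\theta}$ by some $\boldsymbol{\xi}\in\mathbb A_i^T$ into $\ker D^+_{i,p/q}$, and then reads off a parity contradiction with Theorem~\ref{OSzPlumb}. You instead run the long exact sequence of the mapping cone directly in the $\Z/2\Z$-grading of Remark~\ref{gradingshift}, and your observation that $H_*(\mathbb B^+_i)$ sits entirely in the odd parity of the cone grading lets you sandwich $H_1(\mathbb A^+_i)$ between zeros without ever invoking surjectivity. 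Your final remark is on point: since $D^+_{i,p/q}$ preserves the original $\Z/2\Z$-grading and $H_*(\mathbb B^+_i)$ is even, any odd class in $H_*(\mathbb A^+_i)$ already lies in $\ker D^+_*$, so the correction term $\boldsymbol{\xi}$ in the paper's argument is in fact unnecessary here (though the surjectivity lemma does real work later in the negatively oriented case, where one must distinguish reduced classes from the tower).
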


\begin{cor}\label{postor}
Under the same conditions, all the elements of $HF^+_{\mathrm{red}}(S^3_0(K))$ have odd $\Z/2\Z$-grading, and all the torsion coefficients $t_i(K)$ (cf equation (\ref{torsion})) are non-negative.
\end{cor}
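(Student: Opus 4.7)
The plan is to combine Theorem~\ref{mainpos} with the zero-surgery mapping cone formula~(\ref{zerosurgery}), using the grading convention of Remark~\ref{gradingshift}. By Theorem~\ref{mainpos}, for every $k \in \Z$ the homology $H_*(A_k^+)$ is supported in even $\Z/2\Z$-gradings; since $H_*(B^+) = \mathcal T^+_0$ is likewise in even grade, and the convention of Remark~\ref{gradingshift} reverses the $\Z/2\Z$-grading on $A_k^+$ while preserving that on $B^+$, $H_*(A_k^+)$ contributes only to the odd part of the cone and $H_*(B^+)$ only to the even part. The long exact sequence of the mapping cone then decouples by parity, and one reads off
\begin{align*}
HF^+(S^3_0(K),\s_k)_{\mathrm{even}} &\cong \Coker\bigl((v_k^+ + h_k^+)_*\bigr),\\
HF^+(S^3_0(K),\s_k)_{\mathrm{odd}} &\cong \Ker\bigl((v_k^+ + h_k^+)_*\bigr).
\end{align*}

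Next, I would identify the cokernel as exactly the image-of-$HF^\infty$ part of $HF^+(S^3_0(K),\s_k)$, so that $HF^+_{\mathrm{red}}$ lies entirely in odd grading. For $k\ne 0$, Lemma~\ref{lem:V0H0} gives $V_k\ne H_k$, and on the tower $A_k^T$ the map acts as $U^{V_k}+U^{H_k}=U^{\min(V_k,H_k)}\bigl(1+U^{|V_k-H_k|}\bigr)$; since $1+U^m$ acts invertibly on $\mathcal T^+$ and $U^n$ acts surjectively, this map is onto $\mathcal T^+_0$, so the cokernel vanishes and $HF^+(S^3_0(K),\s_k)=HF^+_{\mathrm{red}}$ sits wholly in odd grade. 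For $k=0$, $V_0=H_0$ forces $(v_0^+ + h_0^+)_*$ to vanish on $A_0^T$, so its image is a finite-dimensional $U$-stable submodule of $\mathcal T^+_0$ and the resulting cokernel is again a $\mathcal T^+$-tower; this accounts for one of the two $HF^\infty$-tower summands present at the torsion $\Spinc$ structure on the $b_1=1$ manifold $S^3_0(K)$, and the other tower along with all of $HF^+_{\mathrm{red}}(S^3_0(K),\s_0)$ sits in the odd part (arising from the kernel of $v+h$).

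Finally, to extract $t_i(K)\ge 0$, I would apply the standard Euler-characteristic relation $\chi\bigl(HF^+_{\mathrm{red}}(S^3_0(K),\s_i)\bigr)=-t_i(K)$ from~\cite{OSzSeifert}; concentration of $HF^+_{\mathrm{red}}$ in odd $\Z/2\Z$-grading makes $\chi\le 0$, which forces $t_i(K)\ge 0$. The main obstacle I foresee is the $k=0$ step: verifying that the cokernel is an honest tower with no residual even-grade reduced component. This relies on the classification of finite-dimensional $U$-stable submodules of $\mathcal T^+_0$, together with the finiteness of the image of $v_0^+ + h_0^+$ guaranteed by its vanishing on the tower. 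Tracking the sign in the Euler-characteristic formula is a secondary bookkeeping task.
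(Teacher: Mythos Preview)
Your proposal is correct and follows essentially the same route as the paper: both use the zero-surgery mapping cone~(\ref{zerosurgery}) together with the grading convention of Remark~\ref{gradingshift}, invoke Lemma~\ref{lem:V0H0} to get surjectivity of $(v_k^++h_k^+)_*$ for $k\neq 0$, and conclude via the Euler-characteristic relation with $t_i(K)$. Your treatment of the $k=0$ case is in fact more explicit than the paper's (which simply asserts that the reduced part still comes from the kernel), and your caution about the formula at $i=0$ is warranted---the paper uses the inequality $\chi(HF^+_{\mathrm{red}}(S^3_0(K),0))\ge -t_0(K)$ rather than an equality there.
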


\begin{proof}
 By (\ref{zerosurgery}), $HF^+(S^3_0(K),i)$ is identified with the homology of the mapping cone of $v^+_i+h^+_i\co A^+_i\to B^+$.  When $i\neq 0$, we have $V_i\neq H_i$ (Lemma \ref{lem:V0H0}), so the map is surjective. Thus, the homology of the mapping cone comes entirely from the kernel of the map.  It then follows from Theorem \ref{mainpos} and the $\Z/2\Z$-grading-shift of the mapping cone that all the elements of $HF^+(S^3_0(K),i)$ have odd $\Z/2\Z$-grading.  When $i=0$, $V_0=H_0$; but the reduced part of the homology still comes from the kernel of the map, so the same argument applies to $HF^+_{\mathrm{red}}(S^3_0(K),0)$.  Finally, the statement of the torsion coefficients follows from the fact that
$$\chi(HF^+(S^3_0(K),i))=-t_i(K), \quad \text{for} \; i\neq 0$$ and
$$\chi(HF^+_{\mathrm{red}}(S^3_0(K),0))\geq-t_0(K)$$

\end{proof}

\begin{proof}[Proof of Theorem \ref{mainpos}]
By abusing of notation, we do not distinguish $A_k^+$ from its homology and write $$A_k^+=A_k^T \oplus A^{even}_{k,\mathrm{red}} \oplus A^{odd}_{k,\mathrm{red}},$$ where $A_k^T\cong\mathcal T^+$ has an even $\Z/2\Z$-grading, and the reduced group is divided into even and odd parts according to their $\Z/2\Z$-gradings.  We claim that $A^{odd}_{k,\mathrm{red}}=0$.  Otherwise, take $\theta \in A^{odd}_{k,\mathrm{red}}$ and extend it to $\mbox{\boldmath${\theta}$} \in  \mathbb A^{odd}_{i,\mathrm{red}}$ for some appropriate $i$ by adjoining zero in other relevant $A^+$ components.  Since the map $D_{i,p/q}^{T}\co\mathbb A_i^T\to \mathbb B_i^+$ is surjective (Lemma \ref{lem:>0Surj}), we can find $\mbox{\boldmath${\xi}$}\in \mathbb A_i^T$ such that $D_{i,p/q}^{T}(\mbox{\boldmath${\xi}$})=-D_{i,p/q}^+(\mbox{\boldmath${\theta}$})$. Thus $\mbox{\boldmath${\xi}$}+\mbox{\boldmath${\theta}$}$ is in the kernel of the mapping cone $$D_{i,p/q}^+\co\mathbb A_i^+\to \mathbb B_i^+$$
which, by Theorem \ref{OSmappingcone}, is isomorphic to $HF^+(S^3_{p/q}(K),i)$.   On the other hand, we see that $\mbox{\boldmath${\xi}$}+\mbox{\boldmath${\theta}$}$ has a mixing $\Z/2\Z$-grading.  But according to Theorem \ref{OSzPlumb}, the group $HF^+(S^3_{p/q}(K),i)$ is supported in even dimensions.  This is a contradiction!  Hence all elements of $A_k^+$, or more precisely, the homology $H_*(A_k^+)$ have even $\Z/2\Z$-grading.

\end{proof}

\subsection{Negatively oriented Seifert fibered space}

The case of negatively oriented Seifert fibered space is more interesting and subtle. First, we adapt the argument in the previous section to prove an analogous result for negatively Seifert fibered spaces.

\begin{prop}
Let $p/q>0$, and suppose that $S^3_{p/q}(K)$ is a negatively oriented Seifert fibered space.  Then for any $k\in \Z$, $$H_*(A_k^+)\cong \mathcal T^+_{even}\oplus A^{odd}_{k,\mathrm{red}}.$$
Here, $A_k^+=C\{i\ge0 \text{ or }j\ge k\}$ denotes the chain complex associated to the knot $K\subset S^3$.

\end{prop}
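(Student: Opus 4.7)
The plan is to mirror the proof of Theorem \ref{mainpos}, with the contradiction now coming from exhibiting a non-trivial \emph{reduced} class in $HF^+$ rather than a class of mixed $\Z/2\Z$-grading.

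First I would determine the $\Z/2\Z$-grading of $HF^+(S^3_{p/q}(K),i)$ when $S^3_{p/q}(K)$ is negatively oriented Seifert fibered. Since $-S^3_{p/q}(K)$ is then positively Seifert, a suitable extension of Theorem \ref{OSzPlumb} gives $HF^+(-S^3_{p/q}(K))$ supported in even $\Z/2\Z$-grading. Combining the identity $d(-Y,\mathfrak{s})=-d(Y,\mathfrak{s})$ (so the tower summand of $HF^+(S^3_{p/q}(K),i)$ stays in even grading) with the Poincar\'e duality $HF^+_{\mathrm{red}}(-Y)\cong HF^-_{\mathrm{red}}(Y)^{\vee}$ and the standard grading shift of $1$ between $HF^+_{\mathrm{red}}$ and $HF^-_{\mathrm{red}}$, I expect to conclude that $HF^+_{\mathrm{red}}(S^3_{p/q}(K),i)$ lies entirely in \emph{odd} $\Z/2\Z$-grading.

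Decompose $A_k^+ = A_k^T \oplus A_{k,\mathrm{red}}^{even} \oplus A_{k,\mathrm{red}}^{odd}$; the statement is equivalent to $A_{k,\mathrm{red}}^{even}=0$. Suppose for contradiction there is a non-zero $\theta \in A_{k,\mathrm{red}}^{even}$; extend it to $\mathbf{\theta}\in \mathbb{A}_{i,\mathrm{red}}^{even}$ by zero-padding the other $s$-coordinates, and by surjectivity of $D^T_{i,p/q}$ (Lemma \ref{lem:>0Surj}) produce $\mathbf{\xi}\in \mathbb{A}_i^T$ with $D^T(\mathbf{\xi})=-D^+(\mathbf{\theta})$. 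Then $\mathbf{\xi}+\mathbf{\theta}\in \ker D^+_{i,p/q}$ represents a class in $HF^+(S^3_{p/q}(K),i)$ of even $\Z/2\Z$-grading.

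The key step is to show this class is non-trivial in $HF^+_{\mathrm{red}}(S^3_{p/q}(K),i)$, contradicting the odd-grading conclusion of the first paragraph. I would characterize the tower summand $\mathcal{T}^+_{d(Y,i)}\subset HF^+(Y,i)$ as the subgroup $\bigcap_n U^n HF^+$ of $U$-infinitely-divisible classes. If $\mathbf{\xi}+\mathbf{\theta}$ lay in this tower, then for each $n$ one could find $(y_T,y_r)\in\ker D^+_{i,p/q}$ with $U^n y_T=\mathbf{\xi}$ and $U^n y_r=\mathbf{\theta}$. But $\mathbb{A}_{i,\mathrm{red}}^+$ is a finite-dimensional $U$-nilpotent $\mathbb{F}_2[U]$-module, so $U^n$ acts as zero on it once $n$ exceeds its nilpotency index, contradicting $\mathbf{\theta}\neq 0$. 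Hence $[\mathbf{\xi}+\mathbf{\theta}]$ is a non-trivial even-graded class in $HF^+_{\mathrm{red}}$, contradicting the first paragraph and forcing $A_{k,\mathrm{red}}^{even}=0$. The most delicate point I anticipate is the first paragraph: carefully combining the orientation-reversal grading shifts with the $\Z/2\Z$-grading conventions from Remark \ref{gradingshift} so that ``odd'' on the topological side matches ``odd'' in the mapping cone model.
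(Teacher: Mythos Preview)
Your proposal is correct and follows essentially the same route as the paper: decompose $A_k^+$, assume a nonzero even reduced class $\theta$, zero-extend and correct by $\boldsymbol{\xi}\in\mathbb{A}_i^T$ using Lemma~\ref{lem:>0Surj}, and derive a contradiction from the $\Z/2\Z$-grading of $HF^+_{\mathrm{red}}(S^3_{p/q}(K))$. Where the paper simply invokes \cite[Proposition~2.5]{OSzAnn2} for the grading reversal and asserts without comment that $\boldsymbol{\xi}+\boldsymbol{\theta}\in HF^+_{\mathrm{red}}$, you spell both steps out---the second via the characterization of the tower as the infinitely $U$-divisible classes together with the $U$-nilpotence of $\mathbb{A}^+_{i,\mathrm{red}}$---which is a welcome elaboration rather than a different argument.
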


\begin{proof}
Again, by abusing of notation, we do not distinguish $A_k^+$ from its homology and write $$A_k^+=A_k^T \oplus A^{even}_{k,\mathrm{red}} \oplus A^{odd}_{k,\mathrm{red}},$$ where $A_k^T\cong\mathcal T^+$ has an even $\Z/2\Z$-grading, and the reduced group is divided into even and odd parts according to their $\Z/2\Z$-gradings.  This time we need to prove that $A^{even}_{k,\mathrm{red}}=0$.  If this were not true, take $\theta \in A^{even}_{k,\mathrm{red}}$ and extend it to $\mbox{\boldmath${\theta}$} \in  \mathbb A^{even}_{i,\mathrm{red}}$ for some appropriate $i$ by adjoining zero in other relevant $A^+$ components.  Since the map $D_{i,p/q}^{T}\co\mathbb A_i^T\to \mathbb B_i^+$ is surjective (Lemma \ref{lem:>0Surj}), we can find $\mbox{\boldmath${\xi}$}\in \mathbb A_i^T$ such that $D_{i,p/q}^{T}(\mbox{\boldmath${\xi}$})=-D_{i,p/q}^+(\mbox{\boldmath${\theta}$})$. Thus $\mbox{\boldmath${\xi}$}+\mbox{\boldmath${\theta}$}$ is in the kernel of the mapping cone $$D_{i,p/q}^+\co\mathbb A_i^+\to \mathbb B_i^+$$
which, by Theorem \ref{OSmappingcone}, is isomorphic to $HF^+(S^3_{p/q}(K),i)$.   Note that $\mbox{\boldmath${\xi}$}+\mbox{\boldmath${\theta}$}\in HF^+_{\mathrm{red}}(S^3_{p/q}(K),i)$ and it has an even $\Z/2\Z$-grading.  On the other hand, according to \cite[Proposition 2.5]{OSzAnn2}, if $Y$ is any rational homology three-sphere, then $HF^+_{\mathrm{red}}(Y)\cong   HF^+_{\mathrm{red}}(-Y)$, under a map which reverses the $\Z/2\Z$-grading.
Hence, there should not be any elements of $HF^+_{\mathrm{red}}(S^3_{p/q}(K),i)$ that are supported in even dimension.  This is a contradiction!
\end{proof}

We now apply (\ref{zerosurgery}), using the mapping cone $$v^+_i+h^+_i\co A^+_i\to B^+$$
 to compute $HF^+(S^3_0(K),i)$.  Note that $V_i<H_i$ when $i>0$, and the image of $A^{odd}_{i,\mathrm{red}}$ under $v_i$ and $h_i$ must be zero due to the $\Z/2\Z$-grading consideration.  Hence, we have the following:

\begin{prop}\label{groupofzerosurg}
Suppose $p/q>0$ and $S^3_{p/q}(K)$ is a negatively oriented Seifert fibered space.  Then for $i\neq 0$,  $$HF^+(S^3_0(K),i)\cong (A^{odd}_{i,\mathrm{red}})_{even} \oplus (\mathbb{F}_2)^{V_i}_{odd}.$$
and  $$HF^+(S^3_0(K),0)\cong (A^{odd}_{0,\mathrm{red}})_{even} \oplus \mathcal T^+ \oplus \mathcal T^+.$$

\end{prop}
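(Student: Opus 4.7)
The plan is to apply the zero-surgery mapping cone formula (\ref{zerosurgery}), realizing $HF^+(S^3_0(K),i)$ as the homology of $v^+_i+h^+_i\co A^+_i\to B^+\cong\mathcal T^+$, and to compute its kernel and cokernel using the decomposition $A^+_i = A_i^T \oplus A^{odd}_{i,\mathrm{red}}$ supplied by the previous proposition (the even reduced piece having been eliminated there).

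First I would handle the reduced summand $A^{odd}_{i,\mathrm{red}}$: since $v^+_i$ and $h^+_i$ preserve the absolute grading while $B^+\cong\mathcal T^+$ is supported only in even $\Z/2\Z$-grading, the restriction of $v^+_i+h^+_i$ to $A^{odd}_{i,\mathrm{red}}$ must vanish, as the remark preceding the statement already observes. This summand therefore sits entirely in the kernel and contributes nothing to the image.

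Next I would analyse the free summand $A_i^T\cong\mathcal T^+$, where the map acts as multiplication by $U^{V_i}+U^{H_i}$. For $i\neq 0$, Lemma \ref{lem:V0H0} gives $V_i\neq H_i$; a short computation on the basis $\{U^{-k}\}_{k\geq 0}$ of $\mathcal T^+$ shows that $U^{V_i}+U^{H_i}$ is surjective, with kernel of dimension $\min(V_i,H_i)$ spanned by $\{1,U^{-1},\ldots,U^{-(\min(V_i,H_i)-1)}\}$. For $i=0$, $V_0=H_0$ forces the map to be identically zero over $\mathbb{F}_2$, so both kernel and cokernel on this summand are full copies of $\mathcal T^+$.

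Finally I would assemble the mapping cone homology via the standard splitting $HF^+(S^3_0(K),i)\cong\Coker(v^+_i+h^+_i)\oplus\ker(v^+_i+h^+_i)$ (valid over $\mathbb{F}_2$) and apply the $\Z/2\Z$-grading reversal on the $A$-factor stipulated in Remark \ref{gradingshift}. This flips the even-graded $(\mathbb{F}_2)^{\min(V_i,H_i)}$ coming from $A_i^T$ into odd grading and the odd-graded $A^{odd}_{i,\mathrm{red}}$ into even, while the cokernel contribution (trivial for $i\neq 0$ and a copy of $\mathcal T^+$ in even grading for $i=0$) keeps its grading; this produces the two displayed formulas. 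The main obstacle I anticipate is the $\Z/2\Z$-grading bookkeeping through the mapping cone convention and the reversal; the underlying linear algebra on $\mathcal T^+$ is elementary.
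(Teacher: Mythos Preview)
Your approach is essentially identical to the paper's: the proof there is just the short paragraph preceding the proposition, invoking the zero-surgery mapping cone, the vanishing of $v_i^+,h_i^+$ on $A^{odd}_{i,\mathrm{red}}$ by parity, and the grading reversal of Remark~\ref{gradingshift}. The only small thing to add is that your kernel dimension $\min(V_i,H_i)$ should be identified with $V_i$ for $i>0$ via Lemma~\ref{lem:V0H0} (the paper notes $V_i<H_i$ there), so that the formula matches the stated $(\mathbb{F}_2)^{V_i}$.
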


Note the $\Z/2\Z$-grading shift in the statement of the above Proposition. This is due to Remark \ref{gradingshift}.  Applying Lemma \ref{lem:VHMono}, we can obtain a bound for the torsion coefficients $t_i(K)$.

\begin{cor}\label{negfourballgenus}
Suppose $p/q>0$ and $S^3_{p/q}(K)$ is a negatively oriented Seifert fibered space.  Then for $i> 0$, $$t_i(K)\leq \mathrm{max} \{g^*(K)-i, 0\}.$$
Moreover, if $g^*(K)<g$, then all the elements of $HF^+_{\mathrm{red}}(S^3_0(K),g-1)$ have even $\Z/2\Z$-grading.
\end{cor}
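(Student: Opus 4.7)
The plan is to read both conclusions directly off Proposition \ref{groupofzerosurg} by combining its decomposition with two elementary inputs: the Euler characteristic identity $\chi(HF^+(S^3_0(K),i)) = -t_i(K)$ for $i \neq 0$, and the bound $V_k \leq \max\{g^*(K)-k,\,0\}$ from Lemma \ref{lem:VHMono}(3). No new geometric input is required — the corollary is a direct bookkeeping consequence of what has already been set up.

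First, to prove the torsion coefficient inequality, fix $i > 0$ and unpack the decomposition
\[
HF^+(S^3_0(K),i) \;\cong\; (A^{odd}_{i,\mathrm{red}})_{even} \;\oplus\; (\mathbb{F}_2)^{V_i}_{odd}
\]
into the $\Z/2\Z$-graded Euler characteristic:
\[
-t_i(K) \;=\; \chi\bigl(HF^+(S^3_0(K),i)\bigr) \;=\; \dim_{\mathbb{F}_2}(A^{odd}_{i,\mathrm{red}}) \;-\; V_i.
\]
Rearranging gives $t_i(K) = V_i - \dim_{\mathbb{F}_2} A^{odd}_{i,\mathrm{red}} \leq V_i$, and then $V_i \leq \max\{g^*(K)-i,\,0\}$ by Lemma \ref{lem:VHMono}(3), which is the claim.

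For the parity statement, assume $g^*(K) < g$ and take $i = g-1$. Since $g-1 \geq g^*(K)$, Lemma \ref{lem:VHMono}(3) forces $V_{g-1} = 0$, so the odd-graded summand $(\mathbb{F}_2)^{V_{g-1}}_{odd}$ in the decomposition disappears. When $g > 1$ the class $i = g-1$ is a non-torsion $\Spinc$ structure on $S^3_0(K)$, hence $HF^+ = HF^+_{\mathrm{red}}$ in that summand, and we are left with $HF^+_{\mathrm{red}}(S^3_0(K),g-1) \cong (A^{odd}_{g-1,\mathrm{red}})_{even}$, supported entirely in even $\Z/2\Z$-grading. In the boundary case $g = 1$ (so $g^*(K) = 0$), one reads the $i=0$ line of Proposition \ref{groupofzerosurg} instead, and quotienting out the two $\mathcal{T}^+$ towers (the image of $HF^\infty$) leaves the same even-graded remainder $(A^{odd}_{0,\mathrm{red}})_{even}$.

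The only point requiring care — more bookkeeping than obstacle — is correctly tracking the $\Z/2\Z$-grading shift from Remark \ref{gradingshift}, so that the ``even''/``odd'' subscripts appearing in Proposition \ref{groupofzerosurg} align with the grading conventions used in the statement of the corollary. Once that is pinned down, both parts follow immediately from the preceding material.
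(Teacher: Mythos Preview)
Your proof is correct and follows essentially the same route as the paper: compute $t_i(K)=V_i-\rank A^{odd}_{i,\mathrm{red}}$ from the Euler characteristic of the decomposition in Proposition~\ref{groupofzerosurg}, then bound $V_i$ by Lemma~\ref{lem:VHMono}(3); for the parity claim, observe that $g^*(K)<g$ forces $V_{g-1}=0$ so the odd summand vanishes. The paper's own argument is terser (it simply says the second part ``follows from Proposition~\ref{groupofzerosurg}''), but your added case analysis for $g=1$ and the remark on the grading shift are just explicit elaborations of the same reasoning.
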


\begin{proof}
For $i\neq 0$,
\begin{align*}
t_i(K) &=-\chi(HF^+(S^3_0(K),i)\\
&=V_i-\rank (A^{odd}_{i,\mathrm{red}})\\
&\leq V_i\\
&\leq  \mathrm{max} \{g^*(K)-i, 0\}.
\end{align*}
The second part of the statement follows from Proposition \ref{groupofzerosurg}.
\end{proof}

When $0<p/q<3$, we can actually prove something stronger, which is a natural generalization of \cite[Proposition 3.5]{OSzSeifert}.

\begin{thm}\label{mainneg}
Suppose $0<p/q <3$ and $S^3_{p/q}(K)$ is a negatively oriented Seifert fibered space.  Then all the elements of $HF^+_{\mathrm{red}}(S^3_0(K),i)$ have even $\Z/2\Z$-grading.  
Hence for all $i>0$, $t_i(K)\leq 0$.

\end{thm}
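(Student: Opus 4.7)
By Proposition \ref{groupofzerosurg}, for $i \neq 0$ the group $HF^+(S^3_0(K), i)$ decomposes as $(A^{odd}_{i,\mathrm{red}})_{even} \oplus (\mathbb{F}_2)^{V_i}_{odd}$, so the conclusion that $HF^+_{\mathrm{red}}(S^3_0(K), i)$ lies in even $\mathbb{Z}/2\mathbb{Z}$-grading is equivalent to showing $V_i = 0$ for all $i > 0$.  The inequality $t_i(K) \leq 0$ then follows immediately from the identity $t_i = V_i - \rank(A^{odd}_{i,\mathrm{red}})$ used in the proof of Corollary \ref{negfourballgenus}.

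To establish $V_i = 0$, the plan is to adapt the mapping cone argument of the preceding proposition, now using the $p/q$-surgery cone with $0 < p/q < 3$.  That proposition gives $A^+_k \cong \mathcal{T}^+_{even} \oplus A^{odd}_{k,\mathrm{red}}$, and the $\mathbb{Z}/2\mathbb{Z}$-grading argument shows that $v_k$ and $h_k$ vanish on $A^{odd}_{k,\mathrm{red}}$ (their target $B^+ \cong \mathcal{T}^+$ is concentrated in even grading).  Consequently $D^+_{j,p/q}$ restricts to the surjection $D^T$ on $\mathbb{A}^T_j$ and is zero on $\mathbb{A}^{\mathrm{red}}_j$, yielding the splitting $HF^+(S^3_{p/q}(K), j) \cong \ker D^T \oplus \mathbb{A}^{\mathrm{red}}_j$ with $\ker D^T \cong \mathcal{T}^+_{d(S^3_{p/q}(K), j)}$ supported in a single $\mathbb{Z}/2\mathbb{Z}$-parity.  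Now suppose for contradiction that $V_{i_0} > 0$ for some $i_0 > 0$.  By Lemma \ref{lem:VHMono}(2) we may take $i_0 = 1$, and Lemma \ref{lem:V0H0} gives $H_1 > V_1 \geq 1$, so the top generator $\mathbf{1}_1 \in A^T_1$ is annihilated by both $v_1 = U^{V_1}$ and $h_1 = U^{H_1}$.  Choose $j \in \{0,\dots,p-1\}$ and $s_0 \in \mathbb{Z}$ with $\lfloor (j + p s_0)/q \rfloor = 1$, and define $\boldsymbol{\xi} \in \mathbb{A}^T_j$ by $\xi_{s_0} = \mathbf{1}_1$ and $\xi_s = 0$ otherwise; a direct check using the vanishing of $v_1(\mathbf{1}_1)$ and $h_1(\mathbf{1}_1)$ shows $\boldsymbol{\xi} \in \ker D^T$.

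The heart of the argument will then be the $\mathbb{Z}/2\mathbb{Z}$-grading computation: one shows that the absolute grading of $\boldsymbol{\xi}$ — determined by the top grading of $A^T_1$ together with the $\Spinc$-grading shift at slot $s_0$ — has the opposite parity to $d(S^3_{p/q}(K), j)$, contradicting the fact that $\ker D^T \cong \mathcal{T}^+_{d(S^3_{p/q}(K), j)}$ is supported in a single parity.  The hypothesis $0 < p/q < 3$ enters precisely here to bound the index jumps $\lfloor (j + p(s+1))/q \rfloor - \lfloor (j + p s)/q \rfloor$ by $2$, which keeps the grading comparison tractable and rules out compensating tower contributions from adjacent slots; for $p/q \geq 3$ the larger jumps allow additional corrections that can restore the ``correct'' parity, and the argument breaks down.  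This parity calculation, requiring careful tracking of the absolute $\mathbb{Q}$-grading in the mapping cone via the $V_k$, $H_k$ and the relevant lens space grading shifts, will be the main obstacle.  Once $V_i = 0$ for $i > 0$ is established, Proposition \ref{groupofzerosurg} gives $HF^+_{\mathrm{red}}(S^3_0(K), i) \cong (A^{odd}_{i,\mathrm{red}})_{even}$, supported in even $\mathbb{Z}/2\mathbb{Z}$-grading as required, and the torsion coefficient bound $t_i(K) \leq 0$ follows at once.
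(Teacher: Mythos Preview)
Your reduction to $V_1=0$ and the construction of the kernel element $\boldsymbol{\xi}$ supported in the slot with index $1$ are correct and match the paper's setup. The gap lies in what you call the ``heart of the argument.''

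First, the assertion $\ker D^T\cong\mathcal{T}^+_{d(S^3_{p/q}(K),j)}$ is precisely what is at stake and cannot be taken for granted: Lemma~\ref{lem:>0Surj} gives only surjectivity of $D^T$, not that its kernel is a single tower. Under the contradiction hypothesis $V_1>0$ the kernel is in fact strictly larger than $\mathcal{T}^+$; indeed your $\boldsymbol{\xi}$ is exactly an ``extra'' element of $\ker D^T$ not lying in the free tower. Second, the proposed parity contradiction cannot arise: by Remark~\ref{gradingshift} the $\Z/2\Z$-grading on $\mathbb{A}^+_j$ is the original one, so every element of $\mathbb{A}^T_j$ is even, and in particular both $\boldsymbol{\xi}$ and the bottom of the free tower have \emph{the same} parity. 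No amount of absolute $\Q$-grading bookkeeping will manufacture a $\Z/2\Z$ discrepancy between two elements of $\mathbb{A}^T_j$.

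The paper's argument is instead a short, elementary $U$-action computation showing directly that $\boldsymbol{\xi}$ (the paper's $\boldsymbol{\theta}$) does \emph{not} lie in the free part $\mathcal{T}^+\subset HF^+(S^3_{p/q}(K),j)$. If it did, then for $n>V_1$ the element $U^{-n}\boldsymbol{\theta}=\{(s,\xi_s)\}$ would exist in $\mathbb{A}^T_j$ with $\xi_{s_0}=U^{-n}\mathbf{1}_1$. The mapping-cone relation $h_{k'}(\xi_{s_0-1})=v_1(\xi_{s_0})\ne 0$ forces $\xi_{s_0-1}\ne 0$, and here the hypothesis $p/q<3$ enters exactly as you guessed, but in a concrete form: it guarantees that the left-adjacent index $k'=\lfloor\frac{j+p(s_0-1)}{q}\rfloor$ is $0$ or $-1$, whence $H_{k'}\ge V_1$ (using $H_0=V_0\ge V_1$ or $H_{-1}=V_1$). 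This forces $U^n\xi_{s_0-1}\ne 0$, so $\boldsymbol{\theta}=U^n(U^{-n}\boldsymbol{\theta})$ has a nonzero component in slot $s_0-1$, contradicting its construction. Thus $\boldsymbol{\theta}$ is an even-graded element of $HF^+_{\mathrm{red}}(S^3_{p/q}(K),j)$, contradicting the negative Seifert orientation. No absolute grading computation is needed.
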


\begin{proof}
In light of Proposition \ref{groupofzerosurg}, it is enough to show that $V_i=0$ for all $i>0$; and since $V_i$ is a decreasing sequence, this is equivalent to $V_1=0$.  We prove by contradiction by assuming $V_1>0$.  Let us carefully elaborate on the cases $p/q=1 \;\mathrm{or}\; 2$, for all the other cases can be argued similarly.  Let $\theta\in A_1^T$ be the generator of $A_1^T\cong \mathcal{T}^+$.  Under the assumption that $V_1>0$, we have $h_1(\theta)=v_1(\theta)=0$.  Thus by adjoining 0 in other $A^+$, we can extend $\theta$ to $\mbox{\boldmath${\theta}$} \in \mathbb A^+_{1}$, and it is clear from the construction that $\mbox{\boldmath${\theta}$}\in H_*(\mathbb{X}^+_{i=1, p/q=1 \;\mathrm{or}\;2})\cong HF^+(S^3_{p/q}(K),i=1)$ has an even $\Z/2\Z$-grading.

We claim that $\mbox{\boldmath${\theta}$}$ is not in the free part $\mathcal{T}^+$ of $HF^+(S^3_{p/q}(K),i=1)$.  Otherwise, take $n>V_1$ and we have $U^{-n}\mbox{\boldmath${\theta}$}:=\mbox{\boldmath${\xi}$}=\{(s,\xi_s)\}_{s\in \Z}\in \mathbb{A}_1^T$.  Note that $\xi_0=U^{-n}\theta$.  For $p/q=2$, since $h_{-1}(\xi_{-1})=v_1(\xi_{0})$ and $H_{-1}=V_1$, we must have $\xi_{-1}=U^{-n}\theta_{-1}$ where $\theta_{-1}$ is the generator of $A^T_{-1}$.  For $p/q=1$, since $h_{0}(\xi_{-1})=v_1(\xi_{0})$ and $H_{0}=V_0\geq V_1$, we must have $\xi_{-1}=U^{-n-V_0+V_1}\theta_{0}$ where $\theta_{0}$ is the generator of $A^T_{0}$.  In either case, this implies that $\mbox{\boldmath${\theta}$}=U^n\mbox{\boldmath${\xi}$}$ has a non-zero component in $A^T_{-1}$ or $A^T_{0}$, contradicting to the original assumption of $\mbox{\boldmath${\theta}$}$.  Note that the same argument works for an arbitrary positive rational number $p/q <3 $, since what we really used in our argument is the fact that there is an $\mathbb{A}_i^+$ such that the left adjacent complex of $A_1^+$ is either $A_0^+$ or $A_{-1}^+$.  Hence, what we have achieved so far is finding an element $\mbox{\boldmath${\theta}$}$ with an even $\Z/2\Z$-grading that is not in the free part of $HF^+(S^3_{p/q}(K),i)$ for some $i$.  This contradicts to the fact that $S^3_{p/q}(K)$ is a negatively oriented Seifert fibered space, whose reduce part of Heegaard Floer homology should support on odd dimension.

\end{proof}

In fact, for an arbitrary positive rational number $p/q$, we can likewise obtain a bound for the torsion coefficients $t_i(K)$ in terms of the size of the surgery.

\begin{prop}\label{sizeofsurgery}
Suppose $0<p/q <2m+1$, $m\geq 1$, and $S^3_{p/q}(K)$ is a negatively oriented Seifert fibered space.  Then for $i>0$,
$$t_i(K)\leq \mathrm{max}\{m-i, 0\}.$$
Moreover, for $i\geq m$, all the elements of $HF^+_{\mathrm{red}}(S^3_0(K),i)$ have even $\Z/2\Z$-grading.
\end{prop}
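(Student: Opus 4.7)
The plan is to reduce the statement to proving $V_m = 0$. By Lemma \ref{lem:VHMono}(2) the sequence $V_k$ is non-increasing and drops by at most one per step, so $V_m = 0$ immediately yields $V_i \leq \max\{m-i, 0\}$ for all $i > 0$. Inserting this into Proposition \ref{groupofzerosurg} gives the torsion coefficient bound $t_i(K) = V_i - \rank(A^{odd}_{i,\mathrm{red}}) \leq \max\{m-i, 0\}$, and the vanishing $V_i = 0$ for $i \geq m$ kills the odd-graded summand of $HF^+_{\mathrm{red}}(S^3_0(K), i)$, forcing that reduced group to be concentrated in even $\Z/2\Z$-grading.

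To prove $V_m = 0$ I would mimic the contradiction scheme of Theorem \ref{mainneg}. Suppose $V_m > 0$; by Lemma \ref{lem:V0H0} one also has $H_m > V_m > 0$, so the generator $\theta$ of $A_m^T \cong \mathcal{T}^+$ is annihilated by both $v_m$ and $h_m$. Choose $i \in \{0,\dots,p-1\}$ and an index $s_0 \in \Z$ so that $\lfloor (i + ps_0)/q \rfloor = m$ while the drop to the left-adjacent complex equals $\lfloor p/q \rfloor$ rather than $\lceil p/q \rceil$; such a choice is available for every rational $p/q$. The hypothesis $p/q < 2m+1$ forces $\lfloor p/q \rfloor \leq 2m$, so the adjacent index $k_{-1} := \lfloor (i + p(s_0-1))/q \rfloor = m - \lfloor p/q \rfloor$ satisfies $k_{-1} \geq -m$. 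Extending $\theta$ by zero gives a cycle $\mbox{\boldmath${\theta}$} \in \mathbb{A}_i^+$ in even $\Z/2\Z$-grading; since $HF^+_{\mathrm{red}}(S^3_{p/q}(K), i)$ is concentrated in odd grading by the negatively Seifert fibered hypothesis, $[\mbox{\boldmath${\theta}$}]$ must lie in the free $\mathcal{T}^+$-summand of $HF^+(S^3_{p/q}(K), i)$, and for $n$ sufficiently large there is then a cycle $\mbox{\boldmath${\xi}$} \in \mathbb{A}_i^T$ with $U^n \mbox{\boldmath${\xi}$} = \mbox{\boldmath${\theta}$}$.

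The contradiction comes from tracing $U^n \mbox{\boldmath${\xi}$}$ at the index $s_0 - 1$. Taking $\xi_{s_0} = U^{-n}\theta$, the cycle condition $v_m(\xi_{s_0}) + h_{k_{-1}}(\xi_{s_0 - 1}) = 0$ combined with $H_{k_{-1}} = V_{-k_{-1}}$ (Lemma \ref{lem:VHMono}(1)) forces $\xi_{s_0 - 1} = U^{-(n - V_m + V_{-k_{-1}})} \theta_{k_{-1}}$, where $\theta_{k_{-1}}$ denotes the generator of $A_{k_{-1}}^T$. Monotonicity now yields $V_{-k_{-1}} \geq V_m$ (since $-k_{-1} \leq m$), so a direct calculation of the $U$-action on $\mathcal{T}^+$ shows $U^n \xi_{s_0 - 1} = U^{-(V_{-k_{-1}} - V_m)} \theta_{k_{-1}} \neq 0$, contradicting $U^n \mbox{\boldmath${\xi}$}|_{s_0 - 1} = \mbox{\boldmath${\theta}$}|_{s_0 - 1} = 0$. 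The subtle step is arranging $\mbox{\boldmath${\xi}$}$ in the tower part $\mathbb{A}_i^T$ with $U^n \mbox{\boldmath${\xi}$} = \mbox{\boldmath${\theta}$}$ holding on the nose in $\mathbb{A}_i^+$; this uses the surjectivity of $D_{i,p/q}^T$ from Lemma \ref{lem:>0Surj} together with the fact that boundaries in the mapping cone of $D_{i,p/q}^+$ have vanishing $\mathbb{A}$-component, so that $U$-divisibility in $HF^+$ upgrades from a homological statement to an actual equality in the chain complex.
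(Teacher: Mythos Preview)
Your proposal is correct and follows precisely the approach the paper takes: reduce everything to $V_m=0$, then deduce the torsion and grading statements via Lemma~\ref{lem:VHMono} and Proposition~\ref{groupofzerosurg}. The paper's own proof simply says ``following the same argument of the proof for Theorem~\ref{mainneg}, we are actually able to show that $V_m=0$'' and then runs the same computation you give; you have made explicit the one point the paper leaves implicit, namely that one can always choose $i$ and $s_0$ so that the left-adjacent index $k_{-1}=m-\lfloor p/q\rfloor$ satisfies $k_{-1}\ge -m$, which is exactly what replaces the paper's observation that for $p/q<3$ the neighbor of $A_1^+$ is $A_0^+$ or $A_{-1}^+$.
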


\begin{proof}
Following the same argument of the proof for Theorem \ref{mainneg}, we are actually able to show that $V_m=0$ if $S^3_{p/q}(K)$ is a negatively oriented Seifert fibered space with $p/q<2m+1$.  Next, we apply Lemma \ref{lem:VHMono} and see that $V_i\leq m-i$ for $0<i<m$.  Thus \begin{align*}
t_i(K) &=-\chi(HF^+(S^3_0(K),i)\\
&=V_i-\rank (A^{odd}_{i,\mathrm{red}})\\
&\leq V_i\\
&\leq  \mathrm{max} \{m-i, 0\}.
\end{align*}
The second part of the statement follows from Proposition \ref{groupofzerosurg}.
\end{proof}

\subsection{Proofs of the various obstructions}

\begin{proof}[Proof of Theorem \ref{PSFS}]
This follows readily from Theorem \ref{mainpos} and Theorem \ref{mainneg}.

\end{proof}

\begin{proof}[Proof of Theorem \ref{knotobst}]
According to \cite[Corollary 4.5]{OSzKnot}, $$\HFK(S^3,K,g)\cong HF^+(S^3_0(K),g-1)$$ under an isomorphism which reverse parity when $g>1$.
Suppose that $S^3_{p/q}(K)$ is a positively oriented Seifert fibered space. It readily follows from Corollary \ref{postor} that $\HFK(S^3,K,g)$ is supported in even degrees and is non-trivial by equation (\ref{genus}).

Now, suppose $g>1$ and $S^3_{p/q}(K)$ is a negatively oriented Seifert fibered space.  If in addition, $2g-1 \geq p/q$; by plugging $m=g-1$ in Proposition \ref{sizeofsurgery}, we see that $HF^+(S^3_0(K),g-1)$ is supported in even degree.  Thus,  $\HFK(S^3,K,g)$ is supported in odd degree and is again non-trivial according to equation (\ref{genus}).

\end{proof}

\begin{proof}[Proof of Theorem \ref{fourballgenus}]
Suppose there is a $p/q$-surgery on $K$ that yields a Seifert fibered space.  By taking the mirror if necessary, we can assume that $p/q>0$.  According to Corollary \ref{surgcoeff}, $S^3_{p/q}(K)$ can never be a positively oriented Seifert fibered space.  On the other hand, we see from Corollary \ref{negfourballgenus} that all elements of $HF^+_{\mathrm{red}}(S^3_0(K),g-1)$ have even $\Z/2\Z$-grading.  Thus, $\HFK(S^3,K,g)$ is supported in odd degrees and has to be non-trivial according to equation (\ref{genus}).  It then follows from (\ref{Alexander}) that the Alexander polynomial must have non-zero leading term, which contradicts to the assumption that $\mathrm{deg}\;\Delta_K<g$.

\end{proof}

In order to prove Theorem \ref{sliceknot}, we need to find a relationship between $\chi(HF^+_{\mathrm{red}}(S^3_0(K),0)$ and $t_0(K)$.  This has to do with two numerical invariants analogous to the correction term, $d_{\pm 1/2}(Y_0)$, on a three-manifold $Y_0$ with first homology isomorphic to $\Z$.  More precisely, $d_{\pm 1/2}(Y_0)$ is the maximal $\Q$-grading of any element in $HF^+(Y_0)$ contained in the image of $HF^\infty(Y_0)$ whose parity is given by $\pm\frac{1}{2}+2\Z$.  We have the following identity:
$$
\chi(HF^+_{\mathrm{red}}(S^3_0(K),0))- (\frac{d_{-1/2}(S^3_0(K))-d_{1/2}(S^3_0(K))+1}{2})                     =-t_0(K)
$$
It also follows from the algebraic structure of $HF^\infty$ that
\begin{equation}\label{dineq}
d_{1/2}(Y_0)-1\leq d_{-1/2}(Y_0).
\end{equation}

\begin{lem}\label{dslice}
Suppose $K$ is a slice knot, then $d(S^3_1(K))=0$.
\end{lem}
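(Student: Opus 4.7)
The plan is to combine the slice bound from Lemma \ref{lem:VHMono}(3) with the mapping cone computation of Theorem \ref{OSmappingcone} specialized to $+1$-surgery.

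Because $K$ is slice, $g^*(K)=0$, so part (3) of Lemma \ref{lem:VHMono} forces $V_0\leq \max\{0,\,g^*(K)\}=0$, and hence $V_0=0$; by Lemma \ref{lem:V0H0}, also $H_0=0$. The remaining task is the formula $d(S^3_1(K))=-2V_0$, which then delivers $d(S^3_1(K))=0$.

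For this, I would specialize Theorem \ref{OSmappingcone} to $p=q=1$, so that $S^3_1(K)$ has a unique $\Spinc$ structure ($i=0$), and examine
$$D^+_{0,1}\co \bigoplus_{s\in\Z}(s,A^+_s)\to \bigoplus_{s\in\Z}(s,B^+),\qquad (s,a_s)\mapsto \bigl(s,\, v^+_s(a_s)+h^+_s(a_{s-1})\bigr).$$
The restriction to the towers $D^T_{0,1}\co \mathbb{A}^T_0\to\mathbb{B}^+_0$ is surjective by Lemma \ref{lem:>0Surj}. Using that $V_s\to\infty$ as $s\to-\infty$ and $H_s\to\infty$ as $s\to+\infty$, one sees that the kernel of $D^+_{0,1}$ restricted to the towers contains a single free $\mathcal{T}^+$-summand, whose bottom generator can be read off explicitly from the sequence $\{(s,\xi_s)\}$ used in the proof of Lemma \ref{lem:>0Surj}. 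A standard absolute-grading computation (carried out in \cite{NiWu}) places this bottom element at $\mathbb{Q}$-grading $-2V_0$, producing the Ni--Wu identity $d(S^3_1(K))=-2V_0$ and therefore $d(S^3_1(K))=0$ in the slice case.

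The principal hurdle is the absolute $\mathbb{Q}$-grading bookkeeping: Theorem \ref{OSmappingcone} and Remark \ref{gradingshift} only track the $\Z/2\Z$-grading, whereas the correction term lives at the finer $\mathbb{Q}$-level, so one must match the grading conventions of $CFK^\infty$ with those of the rational surgery formula. Once those shifts are pinned down as in \cite{NiWu}, the lemma reduces to the single identity $V_0=0$ established in the first step.
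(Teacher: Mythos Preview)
Your proposal is correct and follows essentially the same approach as the paper: both use the slice bound in Lemma~\ref{lem:VHMono}(3) to get $V_0=0$ and then invoke the Ni--Wu correction term formula from \cite{NiWu} (the paper cites the general formula $d(S^3_{p/q}(K),i)=d(L(p,q),i)-2\max\{V_{\lfloor i/q\rfloor},H_{\lfloor (i-p)/q\rfloor}\}$ directly, while you sketch its derivation in the case $p/q=1$ before citing \cite{NiWu} for the absolute grading computation).
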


\begin{proof}
By Lemma \ref{lem:VHMono}, $V_0=0$.  Now we can apply straightforwardly the formula from \cite[Proposition 2.11]{NiWu} for a general $p/q$ surgery:
$$ d(S^3_{p/q}(K),i)=d(L(p,q),i)-2\mathrm{max} \{V_{\lfloor \frac{i}{q}\rfloor}, H_{\lfloor \frac{i-p}{q}\rfloor} \}$$
and the result follows.
\end{proof}

\begin{proof}[Proof of Theorem \ref{sliceknot}]
First, we show that if $K$ is a slice knot, then
$$d_{1/2}(S^3_0(K))=\frac{1}{2} \;\;\;\; \mathrm{and} \;\;\;\; d_{-1/2}(S^3_0(K))=-\frac{1}{2}.$$
Indeed, the argument follows exactly the same line as that of \cite[Proposition 3.5]{OSzSeifert}: We start with the long exact sequence that connects the $HF^+$ of $S^3$, $S^3_0(K)$ and $S^3_1(K)$, and it is easy to see that $d_{-1/2}(S^3_0(K))=-1/2$.  This implies that $d_{1/2}(S^3_0(K))\leq 1/2$ by (\ref{dineq}). On the other hand, since the map from $HF^+(S^3_0(K), 0)$ to $HF^+(S^3_1(K))$ drops degree by 1/2, we see that $d(S^3_1(K))\leq d_{1/2}(S^3_0(K))-1/2$.  We apply Lemma \ref{dslice} to conclude that $d_{1/2}(S^3_0(K))\geq 1/2$.  Hence, $d_{1/2}(S^3_0(K))= 1/2$.

Next, by taking the mirror of $K$ if necessary, we need only to worry about positive surgery coefficients $p/q>0$.  It is then clear from Theorem \ref{PSFS} that no surgery on a knot with both positive and negative torsion coefficients can yield a positively oriented Seifert fibered space. So $S^3_{p/q}$ can only be a negatively oriented Seifert fibered space. But for $i>0$, it follows from Corollary \ref{negfourballgenus} that $t_i(K) \leq 0$; for $i=0$,
\begin{align*}
t_0(K)&=-\chi(HF^+_{\mathrm{red}}(S^3_0(K),0))+ (\frac{d_{-1/2}(S^3_0(K))-d_{1/2}(S^3_0(K))+1}{2})\\
&=-\chi(HF^+_{\mathrm{red}}(S^3_0(K),0))\\
&\leq 0
\end{align*}
where the inequality follows from Proposition \ref{groupofzerosurg}.  This contradicts to the original assumption on torsion coefficients and concludes the proof.

\end{proof}

\end{document}